\documentclass[12pt]{amsart}
\usepackage{amsthm,amsmath,amssymb,mathrsfs,amsbsy,bbm}
\usepackage{comment}
\usepackage{fullpage}
\usepackage[final]{pdfpages}
\usepackage[utf8]{inputenc}
\usepackage[english]{babel}
\usepackage[colorlinks,linkcolor = red]{hyperref}
\usepackage[noabbrev,capitalize]{cleveref}


\hypersetup{ linktoc=page }
\newtheorem{theorem}{Theorem}[section]
\newtheorem*{main theorem}{Main Theorem}
\newtheorem{definition}[theorem]{Definition}
\newtheorem{claim}[theorem]{Claim}
\newtheorem{conjecture}[theorem]{Conjecture}
\newtheorem{lemma}[theorem]{Lemma}
\newtheorem{example}[theorem]{Example}

\newtheorem{remark}[theorem]{Remark}

\newcommand{\Gl}[3]{\mathrm{GL}_{#1}(\mathbb{#2}_{#3})}
\newcommand{\bb}[1]{\mathbb{#1}}
\newcommand{\rmf}[1]{\mathrm{#1}}

\newcommand{\InPr}[2]{\left < {#1}, {#2} \right >}

\newtheorem{introtheorem}{Theorem}

\catcode`\@=11
\newdimen\cdsep
\cdsep=3em

\def\cdstrut{\vrule height .6\cdsep width 0pt depth .4\cdsep}
\def\@cdstrut{{\advance\cdsep by 2em\cdstrut}}

\def\arrow#1#2{
	\ifx d#1
	\llap{$\scriptstyle#2$}\left\downarrow\cdstrut\right.\@cdstrut\fi
	\ifx u#1
	\llap{$\scriptstyle#2$}\left\uparrow\cdstrut\right.\@cdstrut\fi
	\ifx r#1
	\mathop{\hbox to \cdsep{\rightarrowfill}}\limits^{#2}\fi
	\ifx l#1
	\mathop{\hbox to \cdsep{\leftarrowfill}}\limits^{#2}\fi
}
\catcode`\@=12

\cdsep=3em





\title{Distinguished Representations with respect to Symmetric Subgroups of $GL_{n}(\bb{F}_{q})$}
\author{Guy Kapon}
\date{\today}

\begin{document}

\maketitle
\begin{abstract}
    We study representations of $GL_{n}(\bb{F}_{q})$ that are distinguished with respect to a symmetric subgroup $H=GL_{n}(\bb{F}_{q})^{\sigma}$, where $\sigma$ is an involution. We prove that those representations satisfy $\pi \cong \pi ^{*,\sigma}$, thus positively answering a version of the Prasad-Lapid conjecture.
\end{abstract}
\tableofcontents
\section{Introduction}






\subsection{Main Results}
Relative representation theory is concerned with doing harmonic analysis of spaces of functions $C(X)$ with a group $G$ acting on $X$. Of particular interest to many is the case where $G$ is an algebraic group (usually over a local field) and $X=G/H$ is a symmetric space, i.e., there is an involution $\sigma$ of $G$, and $H=G^{\sigma}$.

In \cite{prasad2015relative} Prasad made a conjecture which was later generalized by Erez Lapid for a necessary condition for an irreducible representation of $G$ to be $H$-distinguished, which means that it appears in $C(G/H)$, in the case of symmetric pairs.

\begin{conjecture}
(Lapid-Prasad) Let $G$ be a connected reductive algebraic group defined over a local non-archimedean field $F$, $\theta:G\rightarrow G$ an involution defined over $F$ and $\rmf{H}=G^{\theta}$. Let $\pi $ be a smooth irreducible representation of $G(F)$. If $\pi$ is $\rmf{H}-$distinguished then the L-packet\footnote{We will not define L-packets as we do not need them here, for an expository note see \cite{article}} of $\pi$ is invariant to the functor $\tau \rightarrow \tilde{\tau}^{\theta}$, where $\tilde{\tau}$ is the contragredient representation and $\tau^{\theta}$ is the twist by $\theta$.
\end{conjecture}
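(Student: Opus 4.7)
The plan is to transport the conjecture across the local Langlands correspondence (LLC) to a statement about Langlands parameters, where the notion of an L-packet is built in. Under LLC, the functor $\tau \mapsto \tilde{\tau}^{\theta}$ on smooth irreducible representations of $G(F)$ should correspond to the operation $\phi \mapsto \phi^{\vee} \circ {}^{L}\theta$ on Weil--Deligne parameters $\phi: W_{F}' \to {}^{L}G$, where ${}^{L}\theta$ is the automorphism of the L-group induced by $\theta$ (well defined up to $\hat{G}$-conjugation). Since the L-packet of $\pi$ is by definition the fibre of LLC over $\phi_{\pi}$, the conjecture reduces to showing that whenever any member of $\Pi_{\phi}$ is $\rmf{H}$-distinguished, one has $\phi \sim \phi^{\vee} \circ {}^{L}\theta$ in $\hat{G}$.

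First I would reduce to the supercuspidal (in fact discrete series) case via parabolic induction. For a $\theta$-stable parabolic $P = MN$ with Levi $M$ and induced involution on $M$, the Blanc--Delorme geometric lemma for symmetric spaces expresses $\mathrm{Hom}_{\rmf{H}}(\mathrm{Ind}_{P}^{G} \sigma, \mathbf{1})$ as a sum, over $(P,\rmf{H})$-double cosets, of distinguishing Hom-spaces for Jacquet modules of $\sigma$ relative to twisted symmetric subgroups of $M$. Combined with Casselman's pairing and compatibility of contragredient with Jacquet modules, this propagates the equality of Langlands parameters up from Levi subgroups, provided one checks that the twisted involutions arising on $M$ are compatible with the dual-group action. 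The residual case after this reduction is where $\pi$ is supercuspidal and no proper $\theta$-stable parabolic contributes.

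For the supercuspidal case my plan is to combine the theory of types (Bushnell--Kutzko for $GL_{n}$, Yu and the recent Fintzen construction in general tame residual characteristic) with a character comparison. Types realize a supercuspidal as $\mathrm{c\text{-}Ind}_{J}^{G} \lambda$ for a compact-mod-centre open subgroup $J$; distinction of $\pi$ unfolds, via Mackey, into distinction of $\lambda$ on finitely many $\theta$-twists of $J \cap \rmf{H}$. Running the same unfolding for $\tilde{\pi}^{\theta}$ and using that supercuspidal characters are detected on regular semisimple elements, one reduces $\pi \cong \tilde{\pi}^{\theta}$ to a character identity of the finite-group-analog type; for non-$GL_{n}$ groups this is packaged into the stable character of $\Pi_{\phi}$ via endoscopic transfer, which is what produces the L-packet statement rather than just an isomorphism of representations.

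The main obstacle is exactly this last step: outside $GL_{n}$, L-packets are not singletons and the relation between $\rmf{H}$-distinction and the internal structure of the packet (the pairing with $\pi_{0}(S_{\phi}^{\theta})$) is conjecturally governed by the Sakellaridis--Venkatesh picture of the dual group of $G/\rmf{H}$, which is only known in limited cases. Making the argument unconditional therefore requires either a robust relative trace formula comparison identifying distinction characters on the spectral side with Galois-theoretic data, or a direct construction of distinguishing functionals from Langlands parameters. Wild supercuspidals (residual characteristic dividing the order of $\theta$ or of the component group) and non-tame endoscopic transfer are where I expect the heaviest lifting to be needed.
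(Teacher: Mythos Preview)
The statement you are attempting to prove is a \emph{conjecture}, and the paper does not prove it. It is stated in the introduction purely as motivation: the paper lists a few special cases established elsewhere (the Jacquet--Rallis pair, Galois pairs for $GL_n$ over $p$-adic fields, real Galois pairs) and then formulates and proves a \emph{finite-field analogue} for $G = GL_n(\mathbb{F}_{q^m})$ with $m \in \{1,2\}$. There is therefore no ``paper's own proof'' to compare against.

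What you have written is not a proof but a research outline, and you essentially say so yourself in the final paragraph. Each of the three steps is conditional on inputs that are not available in the stated generality: the local Langlands correspondence is not known for arbitrary connected reductive $G$ over arbitrary non-archimedean $F$; the compatibility of LLC with contragredient and with outer automorphisms (your assertion that $\tau \mapsto \tilde{\tau}^{\theta}$ corresponds to $\phi \mapsto \phi^{\vee} \circ {}^{L}\theta$) is itself expected but not established in general; the Blanc--Delorme/geometric-lemma reduction does not cleanly reduce the full conjecture to a supercuspidal statement because the double cosets contributing to $\mathrm{Hom}_{H}(\mathrm{Ind}_P^G\sigma,\mathbf{1})$ involve Levi subgroups with \emph{twisted} involutions that need not be inner forms of the original symmetric pair, so the inductive hypothesis does not straightforwardly apply; and, as you acknowledge, the supercuspidal endgame depends on the Sakellaridis--Venkatesh framework and on endoscopic transfer results that are open. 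None of this constitutes a gap you could close with more care --- these are the actual reasons the conjecture remains a conjecture.

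If your intention was to address the result the paper \emph{does} prove, you should instead be looking at Theorem~A (the finite-field statement for $GL_n$), whose proof in the paper proceeds via Deligne--Lusztig theory for the cuspidal case, a Mackey/geometric-lemma argument for parabolic induction, and Zelevinsky's PSH-algebra classification to assemble the pieces.
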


There is also an analogous conjecture for the Archimedean case.


Several cases of this were proved: 
\begin{itemize}

\item For the pair $(\Gl{a+b}{Q}{p},\Gl{a}{Q}{p} \times \Gl{b}{Q}{p})$ this was proved in \cite{uniq}.

\item The case where $G =\mathrm{GL}_{n}(E)$ for quadratic extensions $E/F$, and $\rmf{H}$ is a subgroup of unitary transformations was proved in \cite{Feigon2012OnRD}.

\item The case of real Galois symmetric pairs was proved in \cite{2017}


\end{itemize}
Another natural setting where this conjecture can be stated is over finite fields.  In \cite{DisGK} this was proved for the Jacquet-Rallis case, in this paper we generalize to all symmetric subgroups of $GL_{n}(\bb{F}_{q})$.

Throughout this paper $q$ is an odd prime power.



Our main result is:

\begin{introtheorem}\label{theorem A}
    Let $G=Gl_{n}(\bb{F}_{q^{m}})$ where $m$ is either $1$ or $2$, let $\sigma$ be an algebraic involution, defined over $\bb{F}_{q}$, let $H=G^{\sigma}$ be a symmetric subgroup, and let $\pi \in Irr(G)$ be an irreducible representation. If $\pi^{H}\neq 0$ then $\pi \cong \pi^{*}\circ \sigma$.
\end{introtheorem}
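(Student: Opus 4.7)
The plan is to prove the theorem by induction on $n$, reducing a general irreducible $\pi$ to its cuspidal support via Harish-Chandra theory and then handling the cuspidal base case by explicit character analysis. First I would classify the involutions $\sigma$ of $G = \Gl{n}{F}{q^m}$ that are defined over $\bb{F}_q$: up to $G$-conjugacy, the fixed-point group $H$ is either (i) a Levi-type subgroup $\Gl{a}{F}{q^m}\times \Gl{b}{F}{q^m}$ coming from an inner involution, (ii) an orthogonal or symplectic group coming from an outer involution (only when $m=1$), or (iii), when $m=2$, the Galois subgroup $\Gl{n}{F}{q}$ or a unitary group $U_n(\bb{F}_q)$. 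Case (i) is precisely the Jacquet--Rallis situation treated in \cite{DisGK}, so the new content lies in the other cases.

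For the inductive step, write $\pi$ as a subquotient of $R_L^G(\rho)$ where $(L,\rho)$ is the cuspidal support, and after conjugation assume $L$ is $\sigma$-stable, so that $\sigma_L := \sigma|_L$ is again an involution whose fixed points $L^{\sigma_L}$ fall in the list from Step~1 (on each $GL$-factor of $L$). This is the Richardson--Springer picture of $\sigma$-stable parabolics. Mackey's decomposition of $R_L^G(\rho)|_H$ is indexed by $P \backslash G/H$, and the hypothesis $\pi^H \neq 0$ forces some double coset representative $x$ to make $\rho$ distinguished by the fixed-point group of an involution on a twisted form of $L$. Applying the inductive hypothesis componentwise on the $GL$-blocks of $L$ gives $\rho \cong \rho^*\circ\sigma_L$. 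Because Harish-Chandra induction commutes with the duality $(-)^*$ and with pre-composition with an algebraic automorphism, this lifts to $\pi \cong \pi^*\circ\sigma$, closing the induction.

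The base case, $\pi$ cuspidal, is where the real work lies. By Deligne--Lusztig/Green's parametrization, a cuspidal $\pi$ of $\Gl{n}{F}{q^m}$ corresponds to a regular character $\theta$ of an anisotropic torus $T \cong \bb{F}_{q^{mn}}^\times$, and $\pi^* \circ \sigma$ corresponds to a specific inversion-and-Galois transform $\theta'$ determined by the type of $\sigma$. The claim reduces to showing that nonvanishing of the sum $\sum_{h \in H} \chi_\pi(h)$ forces $\theta$ and $\theta'$ to lie in the same $\mathrm{Gal}(\bb{F}_{q^{mn}}/\bb{F}_q)$-orbit. I would evaluate this character sum via the Deligne--Lusztig formula, reorganizing it as a sum over $\sigma$-stable maximal tori of $H$, and then extract a symmetry of $\theta$ that matches $\theta \mapsto \theta'$.

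The principal obstacle will be carrying out this cuspidal analysis uniformly across the orthogonal, symplectic, Galois, and unitary symmetric subgroups: each case requires identifying the $\sigma$-stable anisotropic tori of $H$ and comparing the resulting character-sum constraint on $\theta$ to the explicit transform $\theta \mapsto \theta'$. A secondary but nontrivial difficulty is bookkeeping for mixed Levis $L$ with several $GL$-blocks, where $\sigma_L$ can permute the blocks, so that the inductive hypothesis must be invoked in a form that covers both within-block involutions and swap-involutions of identical blocks; this is where the precise formulation of the classification in Step~1 matters most.
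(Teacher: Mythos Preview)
Your overall architecture---reduce to cuspidal support via Mackey, then handle cuspidals by Deligne--Lusztig---matches the paper, but two of the steps you describe as routine are in fact where the work lies, and as written they are gaps.

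First, the Mackey step does not give what you claim. From $\pi^H\neq 0$ you get a double coset $PxH$ for which $\rho$ (inflated to $P$) is distinguished by $H^{x}\cap P$. This group is \emph{not} in general the fixed-point group of an involution of $L$: the conjugated involution $\sigma_x(g)=Q(x)\sigma(g)Q(x)^{-1}$ need not preserve $L$ at all, and $H^{x}\cap P$ typically contains unipotent pieces. The paper first uses the geometric lemma to choose $x$ with $Q(x)=x\sigma(x)^{-1}\in N(A)$, and then proves a nontrivial unipotent lemma showing that enough elements of the unipotent radical of a finer parabolic fix the invariant functional. Only after this does one obtain that a \emph{Jacquet restriction} $r_{L'}^{L}(\rho)$ to a possibly smaller Levi $L'\subseteq L$ is distinguished by the fixed points of $\sigma_x$ on $L'$. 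Your sentence ``after conjugation assume $L$ is $\sigma$-stable'' and the subsequent appeal to the inductive hypothesis on $L$ skip precisely this passage.

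Second, even granting $\rho\cong\rho^{*}\circ\sigma_L$ on the cuspidal support, your lift to $\pi$ is incorrect as stated. You get $R_L^G(\rho)\cong (R_L^G(\rho))^{*,\sigma}$ as representations, but this isomorphism could permute the irreducible constituents; it does not automatically fix the particular subquotient $\pi$. The paper closes this with a Zelevinsky-theoretic argument: the $\sigma$-type operation is a PSH-algebra automorphism of the $\rho$-isotypic summand $R(\rho)$, and one checks using the Whittaker model that it fixes the two irreducible constituents of $\rho^2$ and is therefore the identity on all of $R(\rho)$. This is an essential ingredient and is missing from your outline.

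For the cuspidal base case your proposed character-sum computation is viable, but it amounts to reproving by hand Lusztig's theorem that $H$-distinction of a constituent of $R_{T,\theta}$ forces $\theta$ and $\theta^{-1}\circ\sigma$ to be geometrically conjugate; the paper simply invokes that theorem.
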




\subsection{Ideas of The Proof}

\subsubsection*{Cuspidal Case}

This case is done using Deligne-Lusztig induction, in fact, most of this case is included in a theorem by Lusztig:

\begin{theorem}[{{\cite[Theorem 3.5]{Lusztig2007}}}]\label{luzTh}
    Let $\rho$ be an irreducible representation of $G(\bb{F}_{q})$ which is $H-$distinguished. suppose that  $\rho$ appear in $H^{i}_{c}(X)^{\theta}$ (Where $X$ is the Lusztig variety corresponding to some torus $T$). Then there is some $f\in G(\bb{F}_{q^{n}})$ such that $T^{f}$ is $\sigma$ invariant, and $\tilde{\theta}=\theta(N(f^{-1}tf))$ as a character on $T^{f}(\bb{F}_{q^n})$ satisfies $\tilde{\theta}(\sigma(t)) = \tilde{\theta} (t)^{-1}$.
\end{theorem}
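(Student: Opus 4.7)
The plan is to combine the geometric realization of $\rho$ (via the Deligne-Lusztig variety $X$) with the geometric realization of $H$-distinction (via the symmetric variety $\mathcal{S} = \{g\sigma(g)^{-1} : g \in G\}$), and then extract both conclusions from a double-coset analysis linking the two.

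First, I would translate both hypotheses geometrically. That $\rho$ appears in $H^i_c(X)^\theta$ means $\rho$ is a constituent of the Deligne-Lusztig character $R_T^\theta$, where $T$ is the twisted torus of type $w$ cut out by $X$. By Frobenius reciprocity, $\rho^H \neq 0$ is equivalent to $\rho \hookrightarrow \mathrm{Ind}_{H^F}^{G^F} \bb{C}$, and by Lang's theorem this induced module is canonically identified with the permutation representation on $\mathcal{S}^F$, where $G$ acts on $\mathcal{S}$ by $\sigma$-twisted conjugation. The two hypotheses together produce a non-zero $G^F$-equivariant map $\varphi : H^i_c(X)^\theta \to \bb{C}[\mathcal{S}^F]$.

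Second, I would decompose $\varphi$ along the $G^F$-orbits on $\mathcal{S}^F$. These orbits correspond to classes of $f$ with $f\sigma(f)^{-1} \in G^F$, whose stabilizers are the fixed points of the twisted involution $\sigma^f := \mathrm{Int}(f^{-1}) \circ \sigma \circ \mathrm{Int}(f)$. Non-vanishing of $\varphi$ on the $\theta$-isotypic component forces at least one such orbit to contribute non-trivially; a $T^{wF}$-weight analysis of the cohomology over this orbit then forces $\sigma^f$ to preserve $T$, since otherwise the weight spaces for $\theta$ and $\theta \circ \sigma^f$ would not match and the contribution would vanish. Equivalently, $T^f$ is $\sigma$-stable, and the field of definition of $f$ falls out of the analysis.

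Third, I would extract the character identity from the surviving contribution. After passing to $\bb{F}_{q^n}$, over which $T^f$ splits, and identifying characters of $T^{wF}$ with those of $T^f(\bb{F}_{q^n})$ via the norm, the character $\theta$ transfers to $\tilde\theta(t) = \theta(N(f^{-1}tf))$. The requirement that the $H^F$-invariant functional be fixed by the $\sigma$-action on the $\sigma$-stable torus $T^f$ then becomes $\tilde\theta(\sigma(t)) = \tilde\theta(t)^{-1}$. The main obstacle I foresee is the weight-theoretic argument in the second step: verifying that non-vanishing truly forces $\sigma$-stability of $T^f$, rather than some weaker compatibility, requires careful use of the Deligne-Lusztig fixed-point formula together with Lusztig's character formula to track which cohomological contributions survive.
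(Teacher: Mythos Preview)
The paper does not prove this statement; it is quoted as Theorem~3.5 of \cite{Lusztig2007} and used as a black box (both in the introduction and again in \S\ref{sec:2.2}). There is therefore no proof in the present paper against which to compare your proposal.

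As for the proposal on its own terms: the overall architecture --- realize $H$-distinction via the permutation module on the symmetric variety $\mathcal{S}$, decompose into $G^{F}$-orbits, and study how each orbit interacts with the Deligne--Lusztig cohomology --- is indeed the shape of Lusztig's argument. But your second step is a hope rather than a proof: the assertion that a ``$T^{wF}$-weight analysis'' forces $\sigma^{f}$ to preserve $T$ conceals the entire difficulty. A bare weight-space mismatch does not by itself kill a cohomological contribution; one needs an actual vanishing statement for the cohomology of the relevant fibred varieties, and this is where Lusztig does real work. You correctly flag this as the main obstacle, and it is --- filling that paragraph in amounts to reproducing the substantive part of Lusztig's proof.
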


In simpler words, it says that if $R_{T,\theta}$ has a distinguished representation (or even if only in one of the cohomologies appearing in its definition), then $\theta^{-1}$ and $\theta \circ \sigma$ are geometrically conjugate. 

In particular, this proves:
\begin{theorem} \label{LT}
    If $W_{\theta}$ is trivial, and $R_{T,\theta}$ is $H-$distinguished, then $R_{T,\theta} \cong R_{T,\theta} ^{*,\sigma}$.
\end{theorem}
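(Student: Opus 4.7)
The plan is to combine the standard functoriality identities for Deligne--Lusztig virtual characters with Theorem \ref{luzTh} in order to produce a geometric conjugacy between the pairs $(T,\theta)$ and $(\sigma^{-1}(T),\theta^{-1}\circ\sigma)$, and then to exploit the hypothesis $W_\theta=1$ to promote this conjugacy to an equality of virtual characters, and hence an isomorphism of irreducible representations.

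First I would rewrite the right-hand side. Using the two standard identities $\widetilde{R_{T,\theta}} = R_{T,\theta^{-1}}$ (contragredient) and $R_{T,\theta}\circ \sigma = R_{\sigma^{-1}(T),\,\theta\circ\sigma}$ (functoriality of the Deligne--Lusztig construction under the $\mathbb{F}_q$-automorphism $\sigma$), one obtains
\[
R_{T,\theta}^{*,\sigma} \;=\; R_{\sigma^{-1}(T),\;\theta^{-1}\circ\sigma}.
\]
Next, I would invoke Theorem \ref{luzTh}: since $R_{T,\theta}$ is $H$-distinguished, there exists $f \in G(\overline{\mathbb{F}_q})$ such that $T':=fTf^{-1}$ is $\sigma$-stable and the character $\tilde\theta$ on $T'$ obtained from $\theta$ by $f$-transport satisfies $\tilde\theta\circ\sigma = \tilde\theta^{-1}$. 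Applying $\sigma$ to the geometric equivalence $(T,\theta)\sim(T',\tilde\theta)$ and using the relations $\sigma(T')=T'$ and $\tilde\theta\circ\sigma = \tilde\theta^{-1}$, one deduces that $(\sigma^{-1}(T),\theta^{-1}\circ\sigma)$ is geometrically conjugate to $(T,\theta)$.

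To finish, since $W_\theta$ is trivial, $\pm R_{T,\theta}$ is irreducible, and for $\mathrm{GL}_n$ (which has connected center) geometric conjugacy of generic-position pairs coincides with $G^F$-conjugacy. Hence $R_{T,\theta} = R_{\sigma^{-1}(T),\theta^{-1}\circ\sigma} = R_{T,\theta}^{*,\sigma}$ as virtual characters, giving the desired isomorphism of representations up to the (common) sign.

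The main obstacle is the second step: one must carefully unpack the conclusion of Theorem \ref{luzTh}, which is phrased in terms of the transported character $\tilde\theta = \theta\circ N(f^{-1}(\cdot)f)$ on $T'(\mathbb{F}_{q^n})$ via the norm map $N$, and match it precisely with the geometric-conjugacy statement needed on $\mathbb{F}_q$-points. The first step is a formal computation with well-known identities, and the final step is a standard consequence of the classification of Lusztig series in the connected-center case.
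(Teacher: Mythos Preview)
Your proof is correct and follows essentially the same route as the paper: both compute $R_{T,\theta}^{*,\sigma}=R_{\sigma^{-1}(T),\theta^{-1}\circ\sigma}$ via the standard functoriality identities, invoke Lusztig's theorem to obtain geometric conjugacy of $(T,\theta)$ with $(\sigma^{-1}(T),\theta^{-1}\circ\sigma)$, and then upgrade this to rational ($G^F$-)conjugacy to conclude. The only cosmetic difference is that the paper's restatement of the theorem builds the upgrade step directly into the hypotheses (assuming $(T,\theta)$ is not geometrically conjugate to any pair it is not rationally conjugate to), whereas you deduce it from $W_\theta=1$ together with the connected centre of $\mathrm{GL}_n$---precisely the fact the paper verifies in the preceding Example.
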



Together with the fact that any cuspidal representation of $Gl_{n}$ is of the form $R_{T,\theta}$ (up to sign) for $T$ an anisotropic torus, and $\theta$ in general position, this finishes the proof in the cuspidal case.

This fact is standard when $Gl_{n}(\bb{F}_{q})$ is considered over $\bb{F}_{q}$, and for $Gl_{n}(\bb{F}_{q^{2}})$ as a group over a $\bb{F}_{q}$ we show that the interaction of Deligne-Lusztig induction with restriction of scalars is as nice as one can expect:

\begin{theorem}\label{IR}
    The set of $T,\theta$ a maximal torus and a character on its rational points, is in bijection for  $G(\bb{F}_{q^n})$ as a group over $\bb{F}_{q^n}$, and for the restriction of scalars of $G$ to $\bb{F}_{q}$ as a group over $\bb{F}_{q}$. $R_{T,\theta}$ for corresponding elements is the same up to sign, whether we regard $G$ as a group over $\bb{F}_{q^n}$ or do restriction of scalars and consider it as a group over $\bb{F}_{q}$.
\end{theorem}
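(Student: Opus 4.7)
The plan is to exploit the base-change isomorphism $\tilde G_{\bar{\mathbb{F}}_q} := (\mathrm{Res}_{\mathbb{F}_{q^n}/\mathbb{F}_q} G)_{\bar{\mathbb{F}}_q} \cong G_{\bar{\mathbb{F}}_q}^n$ together with an explicit description of the resulting Frobenius, and then to reduce the Deligne-Lusztig variety for $\tilde G$ to that for $G$ by a simple change of variables. Indexing the $n$ factors by the embeddings $\tau_i : x \mapsto x^{q^i}$ for $0 \le i \le n-1$, the $\mathbb{F}_q$-Frobenius $F'$ on $\tilde G_{\bar{\mathbb{F}}_q}$ takes the form of a twisted cyclic shift $F'(g_0, \ldots, g_{n-1}) = (\phi_q(g_{n-1}), \phi_q(g_0), \ldots, \phi_q(g_{n-2}))$, where $\phi_q$ is the absolute Frobenius. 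Since $(F')^n$ restricts to the $\mathbb{F}_{q^n}$-Frobenius $F = \phi_q^n$ of $G$ on each factor, projection onto the $0$-th coordinate identifies $\tilde G^{F'}$ with $G^F$, with inverse $g \mapsto (g, \phi_q(g), \ldots, \phi_q^{n-1}(g))$.

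From here, the bijection on Deligne-Lusztig data is formal: an $F$-stable maximal torus $T \subset G_{\bar{\mathbb{F}}_q}$ corresponds bijectively to the $F'$-stable maximal torus $\tilde T := \prod_{i=0}^{n-1} \phi_q^i(T)$ of $\tilde G_{\bar{\mathbb{F}}_q}$, compatibly with $\tilde G^{F'} = G^F$-conjugacy, and projection to the $0$-th factor yields $\tilde T^{F'} \cong T^F$, hence a bijection of characters. The same product construction applied to an $F$-stable Borel $B \supset T$ produces $\tilde B = \prod_i \phi_q^i(B) \supset \tilde T$ with unipotent radical $\tilde U = \prod_i \phi_q^i(U)$.

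The heart of the argument is the geometric comparison of the Deligne-Lusztig varieties. Starting from $\tilde X = \{\tilde g \in \tilde G_{\bar{\mathbb{F}}_q} : \tilde g^{-1} F'(\tilde g) \in \tilde U\}$ and making the change of variables $h_i := \phi_q^{-i}(g_i)$, the $i$-th component of the defining condition unfolds into $h_i^{-1} h_{i-1} \in U$ for $1 \le i \le n-1$ and $h_0^{-1} F(h_{n-1}) \in U$ for $i = 0$. The first $n-1$ relations freely parametrize $(h_0, \ldots, h_{n-2})$ from $h_{n-1}$ by $(u_1, \ldots, u_{n-1}) \in U^{n-1}$ via $h_{i-1} = h_i u_i$, and after this substitution the last condition collapses to $h_{n-1}^{-1} F(h_{n-1}) \in U$, the defining condition of the usual Deligne-Lusztig variety $X$ for $(G, F, T, B)$. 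Thus $\tilde g \mapsto h_{n-1}$ exhibits $\tilde X$ as an affine bundle of rank $(n-1)\dim U$ over $X$, and one checks it is equivariant for the left action of $\tilde G^{F'} = G^F$ (since $\phi_q^i(g) \cdot g_i$ becomes $g \cdot h_i$) and the right action of $\tilde T^{F'} = T^F$ (since $g_i \cdot \phi_q^i(t)$ becomes $h_i \cdot t$). Because affine bundles induce $\ell$-adic cohomology isomorphisms up to an even-degree shift and a Tate twist, neither of which alters traces, one obtains $R_{\tilde T, \tilde \theta} = \pm R_{T, \theta}$, with any sign reflecting a convention difference in the overall normalization of $R$.

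The hardest step will be this equivariance verification: although the collapse $\tilde X \to X$ is geometrically transparent, one must carefully track both the $\tilde G^{F'}$-action and the full product torus $\tilde T^{F'}$-action through the twist $h_i = \phi_q^{-i}(g_i)$ to ensure that the character $\tilde \theta$ corresponds to $\theta$. Once that is in place, the equality of virtual characters (up to sign) follows from a routine Lefschetz trace argument.
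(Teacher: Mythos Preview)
Your argument is correct and essentially identical to the paper's: both identify $(\mathrm{Res}_{\mathbb{F}_{q^n}/\mathbb{F}_q}G)_{\bar{\mathbb{F}}_q}$ with $G^n$ under the cyclically shifted Frobenius, match maximal tori and characters, and then project the Deligne--Lusztig variety $\tilde X$ onto a single coordinate to exhibit it as an affine-space bundle over $X$, from which the equality of $R_{T,\theta}$ up to sign follows by the usual cohomological argument. The only cosmetic difference is your preliminary substitution $h_i=\phi_q^{-i}(g_i)$, which lands you in the variety for the Borel $B$ itself rather than $Fr^{n-1}(B)$ as in the paper's direct projection $(x_1,\dots,x_n)\mapsto x_n$; since $R_{T,\theta}$ is independent of the choice of Borel containing $T$, this is immaterial.
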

This extends the previous results to this case.

\subsubsection*{The General Case} 

This Case follows closely what was done in \cite{DisGK}, again using the Zelevinsky classification of representations of $GL_{n}$.

First, we prove:
\begin{lemma}\label{PindLem}
    Let $G,\sigma,H$ be as in \cref{theorem A}. let $L\subseteq P$ where $L$ is a Levi subgroup and $P$ is a Parabolic of $G$ , $\pi$ a representation of $L$. If $i_{L}^{G}(\pi)$ is $H$-distinguished, then there is a levi subgroup $L'\subseteq L$ and an element $A\in G$ such that the map $X\rightarrow A\sigma (X)A^{-1}$ is an involution which preserves $L'$, and $r_{L'}^{L}(\pi)$ is distinguished with respect to this involution.
\end{lemma}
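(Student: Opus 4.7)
The plan is to apply the Mackey-style decomposition of the space of $H$-invariant linear forms on $i_L^G(\pi)$. By Frobenius reciprocity together with the $P$-orbit decomposition of $G/H$,
\[
\operatorname{Hom}_H\bigl(i_L^G(\pi),\mathbb{C}\bigr)\;\cong\;\bigoplus_{x\in P\backslash G/H}\operatorname{Hom}_{P\cap xHx^{-1}}(\pi,\mathbb{C}),
\]
where $\pi$ is inflated along $P\twoheadrightarrow L$. If $i_L^G(\pi)$ is $H$-distinguished, some summand is nonzero, so I fix a representative $x$.

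Setting $A:=x\sigma(x)^{-1}$, a short computation gives $A\sigma(A)=1$, so that $\sigma'(g):=A\sigma(g)A^{-1}$ is itself an involution of $G$ satisfying $xHx^{-1}=G^{\sigma'}$. Since the unipotent radical $U$ of $P$ acts trivially on $\pi$, the group $P\cap G^{\sigma'}$ acts on $\pi$ through its image $M$ under $P\twoheadrightarrow L$; hence $\pi$ is $M$-distinguished as an $L$-representation.

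The heart of the argument is a structural fact. The subgroup $P_1:=P\cap\sigma'(P)$ is a $\sigma'$-stable parabolic of $G$, and $P\cap G^{\sigma'}=P_1\cap G^{\sigma'}$. After adjusting $x$ within its double coset if necessary, one can choose the Levi $L'$ of $P_1$ so that $L'\subseteq L$ and $\sigma'(L')=L'$; then $P_1\cap G^{\sigma'}=(L')^{\sigma'}\cdot U_{P_1}^{\sigma'}$, and projecting to $L$ identifies $M$ with $(L')^{\sigma'}\cdot U_Q$, where $Q\subseteq L$ is the parabolic with Levi $L'$ whose unipotent radical is the image of $U_{P_1}$ in $L$. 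Granting this, $\pi$ is $(L')^{\sigma'}\cdot U_Q$-distinguished, so taking $U_Q$-coinvariants gives that $r_{L'}^L(\pi)$ is $(L')^{\sigma'}$-distinguished. Since $\sigma'(X)=A\sigma(X)A^{-1}$ preserves $L'$ by construction, the lemma follows.

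The hard part is the structural step: arranging the Levi $L'$ of $P_1$ to be simultaneously contained in $L$ and $\sigma'$-stable. For a general reductive group this is the content of the Richardson--Springer theory of $H$-orbits on the flag variety, and requires a careful choice of representative in each $P\backslash G/H$ double coset. For $G=GL_n$ (including the $m=2$ case, where \cref{IR} lets us switch between the two natural points of view) it can be verified directly from the explicit classification of involutions, and the argument closely follows the template of \cite{DisGK}.
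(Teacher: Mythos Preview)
Your outline matches the paper's strategy: Mackey decomposition picks out a double coset, you pass to the twisted involution $\sigma'=\operatorname{Ad}(A)\circ\sigma$ with $A=x\sigma(x)^{-1}$, and then you want the unipotent radical $U_Q$ of the intermediate parabolic $Q=L'\,U_Q\subseteq L$ to kill the invariant functional so that the Jacquet module inherits $(L')^{\sigma'}$-distinction. The paper does exactly this, first invoking the geometric lemma to arrange $A\in N(A_0)$ for the standard split torus $A_0$, so that $L'$ and the block combinatorics $T_{a,b}$ are visible.

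The gap is in your ``structural fact''. You assert that the projection $P\cap G^{\sigma'}\twoheadrightarrow L$ has image $(L')^{\sigma'}\cdot U_Q$, i.e.\ that \emph{every} $u\in U_Q$ lifts to a $\sigma'$-fixed element of $U_{P_1}$. This is the entire content of the paper's \cref{UniLem}, and it is not a formality: for $u=1+M$ one must produce an explicit $\sigma'$-fixed lift. In the inner/Galois types the paper checks that $M$ and $\sigma'(M)$ commute, so $(1+M)\,\sigma'(1+M)\in G^{\sigma'}\cap P$ projects to $1+M$. For the $(-)^{-t}$ and $Fr(-)^{-t}$ types this commutation can fail (when one of the block indices coincides), and the paper needs the additional correction factor $(1+\tfrac{K}{2})$ to manufacture a $\sigma'$-fixed element; this is where odd characteristic is used. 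Your appeal to Richardson--Springer and to \cite{DisGK} does not cover this: what is needed is precisely the surjectivity of $U_{P_1}^{\sigma'}\to U_Q$, which is a statement about $H^1(\langle\sigma'\rangle,U_P)$-type obstructions and is not automatic. So your sketch is correct in shape but defers the one genuinely nontrivial computation.
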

In addition, the proof can be seen to describe $L'$, namely, it will be the maximal Levi such that the involution acts on it.

The proof of the lemma starts by using Mackey theory to get a $P \cap H^g$-distinguished representation, for some $g$. Then, using some information on $P\backslash G/H$, we find a unipotent subgroup that also acts trivially, getting a distinguished Jaquet restriction.


The proof then proceeds in two steps. First, there is the $\rho$-isotopic case. If $\rho$ is a cuspidal representation, then a representation is called $\rho$-isotopic if it is a subrepresentation of $i(\rho^{n})$.

If a $\rho$-isotopic representation is distinguished then applying $\cref{PindLem}$ yields:

\begin{theorem}\label{IsoCase}
    if $\rho \in Irr(GL_{n})$ is a cuspidal representation, and $x\in Irr(GL_{nk})$ is a $\rho$-isotopic distinguished representation, then $\rho$ is invariant under the corresponding type of involution. 
\end{theorem}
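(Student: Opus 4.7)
The plan is to apply \cref{PindLem} to $x$ viewed as a constituent of the parabolic induction $i_L^G(\rho^{\otimes k})$, where $L = GL_n \times \cdots \times GL_n$ ($k$ copies) is the standard block-diagonal Levi. The lemma returns a Levi $L' \subseteq L$, an element $A \in G$, and an involution $\tilde\sigma := \mathrm{Ad}(A) \circ \sigma$ preserving $L'$ such that $r_{L'}^L(\rho^{\otimes k})$ is $(L')^{\tilde\sigma}$-distinguished.

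The first observation is that cuspidality forces $L' = L$. Any Levi $L' \subseteq L$ factors as $L' = \prod_{i=1}^{k} L_i'$ with $L_i' \subseteq GL_n$ a Levi, and Jacquet restriction respects the tensor decomposition, so $r_{L'}^L(\rho^{\otimes k}) = \bigotimes_{i} r_{L_i'}^{GL_n}(\rho)$. Since $\rho$ is cuspidal, each factor vanishes unless $L_i' = GL_n$, forcing $L' = L$. Therefore $\tilde\sigma$ preserves $L$, and $\rho^{\otimes k}$ itself is $L^{\tilde\sigma}$-distinguished.

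Next I would analyze the involution $\tilde\sigma$ on $L = GL_n^k$. Any such involution permutes the $k$ simple factors by an order-two permutation; up to reordering, $\tilde\sigma$ decomposes into fixed blocks on which it acts as an involution $\sigma_i$ of $GL_n$, and swap-pairs of blocks on which it acts as $(g, h) \mapsto (\varphi_j^{-1}(h), \varphi_j(g))$ for some automorphism $\varphi_j$ of $GL_n$. Since invariants of a tensor product factor over orbits of $\tilde\sigma$ on the block set, $\rho^{\otimes k}$ being $L^{\tilde\sigma}$-distinguished is equivalent to a factorwise condition: on each fixed block, $\rho$ is $(GL_n)^{\sigma_i}$-distinguished, which by the cuspidal case (\cref{LT}, together with the realisation of cuspidals as $R_{T,\theta}$ for anisotropic tori) yields $\rho \cong \rho^{*, \sigma_i}$; on each swap pair, the invariants of $\rho \otimes \rho$ under the graph embedding $g \mapsto (g, \varphi_j(g))$ are nonzero iff $\rho \circ \varphi_j \cong \rho^{*}$, i.e.\ $\rho \cong \rho^{*} \circ \varphi_j^{-1}$. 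Either way, $\rho$ is invariant under the involution of $GL_n$ corresponding to the block structure of $\tilde\sigma$.

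The main obstacle I anticipate is matching the involutions produced by this case analysis with the \emph{corresponding type of involution} named in the statement. Concretely, one must verify that the $\sigma_i$ and the $\varphi_j$ extracted from $\tilde\sigma$ are indeed involutions on $GL_n$ of the type demanded by the theorem's classification, and that the resulting symmetries are the ones needed to feed into the general-case argument downstream. Once these identifications are unpacked, the theorem follows directly from the factorwise analysis above.
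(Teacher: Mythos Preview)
Your proposal is correct and follows essentially the same route as the paper: pass from the distinguished $\rho$-isotopic $x$ to the distinguished induction $i_L^G(\rho^{\otimes k})$, apply \cref{PindLem}, use cuspidality of $\rho$ to force $L'=L$, and then split into fixed-block versus swap-pair cases on $GL_n^k$. The type-matching obstacle you flag is handled in the paper by the observation in \S\ref{sec:2.1} that types are preserved under restriction to blocks; one small clarification is that in the swap case the automorphism $\varphi_j$ need not itself be an involution, only of $\sigma$-type, which is all that is required for the conclusion $\rho\cong\rho^{*,\varphi_j}$ to give $\sigma$-type invariance.
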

Here, involution type refers to an involution up to conjugation. In $GL_{n}$ there are four involution types, and those four are consistent as $n$ changes. While being distinguished depends on the involution, being isomorphic to the dual up to a $\sigma$ twist only depends on the type.

A sketch of the proof is as follows. Once we use \cref{PindLem}, we see we either have a block preserved by the involution given in the lemma, in which case the cuspidal case of \cref{theorem A} applies, or we have two blocks that are interchanged, in which case it follows directly.

The isotopic case is then completed using a claim from Zelevinsky theory:

\begin{theorem}\label{zelThm}
    If $\rho$ is a cuspidal representation preserved by an involution type, then so is any $\rho$-isotopic representation.
\end{theorem}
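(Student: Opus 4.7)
The plan is to exploit the Harish--Chandra/Zelevinsky parametrization of $\rho$-isotopic representations and to show that the operation $T(\pi)=\pi^{*}\circ\sigma$ respects it. Let $G=GL_{nk}(\bb{F}_{q})$ and let $L\cong GL_{n}(\bb{F}_{q})^{k}$ be the standard block-diagonal Levi. By Harish--Chandra theory for finite reductive groups, the endomorphism algebra $\mathrm{End}_{G}(i_{L}^{G}(\rho^{\otimes k}))$ is canonically isomorphic to $\bb{C}[S_{k}]$, so the irreducible $\rho$-isotopic representations of $G$ are canonically parametrized by partitions $\lambda\vdash k$; write this bijection as $\lambda\mapsto \pi_{\rho,\lambda}$.

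First I would check that $T$ is compatible with parabolic induction up to relabeling the cuspidal data. Taking contragredient commutes with $i_{L}^{G}$ (via the opposite parabolic, which for finite groups gives a canonically isomorphic representation), and twisting by a group automorphism commutes tautologically with induction from a stable Levi. Choosing a representative $\sigma'$ of the prescribed involution type on $G$ which stabilises $L$ and restricts blockwise to $\sigma$ on each $GL_{n}$ factor---perhaps after pairing and swapping certain blocks, depending on the type---the hypothesis $\rho\cong\rho^{*}\circ\sigma$ then yields $T(\rho^{\otimes k})\cong\rho^{\otimes k}$. Consequently $T$ permutes the set $\{\pi_{\rho,\lambda}\}_{\lambda\vdash k}$ and, via the parametrization, induces an involution on $\mathrm{Irr}(S_{k})$.

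The key step is to show that this induced involution is trivial. The natural approach is to trace the action of $T$ on the endomorphism algebra $\bb{C}[S_{k}]$: duality induces an anti-involution, which on $S_{k}$ corresponds to the inversion map and is therefore an inner automorphism; and the block-permutation component of $\sigma'$ contributes conjugation by a specific element of $S_{k}$. Both contributions are inner automorphisms of $\bb{C}[S_{k}]$, and inner automorphisms act trivially on isomorphism classes of irreducible representations. Hence $T(\pi_{\rho,\lambda})\cong\pi_{\rho,\lambda}$ for every $\lambda\vdash k$, which is the claim.

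The main obstacle will be the bookkeeping in the last step: one must explicitly identify how duality and the chosen $\sigma'$ act on the standard Harish--Chandra intertwining operators realising the $S_{k}$-action on $i_{L}^{G}(\rho^{\otimes k})$, and verify that both preserve them up to sign and inner automorphism. This is a routine but potentially delicate calculation, and may have to be checked case by case across the four involution types enumerated earlier in the paper; the reward is that once the automorphism of $\bb{C}[S_{k}]$ is pinned down to be inner, the conclusion is immediate from the self-duality of irreducibles of the symmetric group.
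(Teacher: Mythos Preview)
Your approach is sound and genuinely different from the paper's.

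The paper works inside Zelevinsky's PSH-algebra framework: the operation $\pi\mapsto\pi^{*,\sigma}$ is a PSH-automorphism of $\bigoplus_{n}R(GL_{n})$ fixing $\rho$, hence restricts to an automorphism of $R(\rho)$. By the classification theorem quoted earlier, $R(\rho)$ has exactly two PSH-automorphisms, distinguished by whether they fix or swap the two irreducible summands $x_{2},y_{2}$ of $\rho^{2}$. The paper pins this down using Whittaker models: exactly one of $x_{2},y_{2}$ is generic (supports a non-degenerate Whittaker functional), and genericity is visibly preserved by $(-)^{*,\sigma}$ because the non-degenerate character $\phi$ is merely replaced by another non-degenerate character, and all such are torus-conjugate. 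Hence the automorphism fixes the generic constituent and must be the identity on all of $R(\rho)$.

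Your Hecke-algebra route bypasses both the PSH classification and Whittaker models, arguing directly that the induced map on $\mathrm{End}_{G}\bigl(i_{L}^{G}(\rho^{\otimes k})\bigr)\cong\bb{C}[S_{k}]$ is inner for each $k$. This is more hands-on and, once completed, arguably more self-contained. But the ``bookkeeping'' you defer is exactly the crux: you must check that under your chosen isomorphism $\rho\cong\rho^{*,\sigma}$ the standard intertwiners $T_{w}$ are carried to $T_{w^{-1}}$ up to inner twist and \emph{not} to $\mathrm{sgn}(w)\,T_{w^{-1}}$; the latter would produce the outer sign automorphism of $\bb{C}[S_{k}]$ (transpose on partitions), which is precisely the non-trivial PSH-automorphism the paper is ruling out. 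In practice this check goes through---the identification $\phi^{\otimes k}$ with $\phi:\rho\xrightarrow{\sim}\rho^{*,\sigma}$ is symmetric in the tensor factors, so it commutes with the swap intertwiners on the nose---but the paper's Whittaker argument replaces the whole computation with a one-line invariance observation.
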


For the final general case, Zelevinsky theory tells us that any irreducible representation is the parabolic induction of a tensor of $\rho_{i}$-isotypic representations for different $\rho_{i}$. In this case, the Jaquet restriction in $\cref{PindLem}$ does not immediately have to be trivial. 

If any block in $L'$ is fixed by the involution, then since the Jaquet restriction of a $\rho$-isotypic representation is still $\rho$-isotypic, this would mean $\rho$ is involution type invariant.
If there are two blocks that are interchanged, then we get directly that if they are in the blocks of $L$ corresponding to $\rho,\rho'$, then $\rho^{\sigma,*}=\rho'$, This means that for each $\rho$, either it is invariant, and then its block is preserved, or it comes with a paired $\rho'$ and their blocks are interchanged. 

In conclusion the Jaquet restriction is trivial, and the $\rho$ are split in to two kinds, those that are are involution type invariant, and therefore so is the corresponding $\rho$-isotypic representation, and pairs $\rho, \rho'$ which have  $\rho^{\sigma,*}=\rho'$, their blocks are interchanged and the corresponding isotypic representations are $\sigma$ twist duals.
This finishes the proof.


\subsection{Structure of The Paper}

In \S \ref{sec:2} we go over preliminaries as well as some notions and basic claims.

In \S \ref{sec:2.1} we go over the possible involutions of $GL_{n}$.

In \S \ref{sec:2.2} we go over the definition and basic facts of Deligne-Lusztig induction, and prepare the use of \cref{luzTh}.

In \S \ref{sec:2.3} we go over the basics of Zelevinsky theory and prove \cref{zelThm}.

In \S \ref{sec:3} we prove the cuspidal case of the main theorem.

In \S \ref{sec:4} we prove the induction lemma.

In \S \ref{sec:5} we prove the main result. 

\subsection{Acknowledgments}

I would like to thank Avraham Aizenbud for suggesting this problem, helping me along the way, and teaching me mathematics. 

I was partially supported by ISF 249/17 and BSF 2018201 grants.
\section{Preliminaries} \label{sec:2}


\subsection{Involutions of $GL_{n}$}\label{sec:2.1}


In this section, we will discuss the possible involutions for $GL_{n}$, and set some notions we will use.


As is known, the automorphism group of a semi-simple group is an extension of the automorphisms of the Dynkin diagram by the inner automorphisms. In other words, we can lift any automorphism of the Dynkin diagram to an automorphism of the group, and if we allow to conjugate these automorphisms, these are all automorphisms.

This extends to $GL_{n}$ even though it is not semi-simple.

With this in mind, $GL_{n}(\bb{F}_{q})$ (as a group over $\bb{F}_{q})$ has two automorphisms of the Dynkin diagram, corresponding to the identity and to $X\rightarrow X^{-t}$, and a general automorphism will be a conjugation of one of those.

$GL_{n}(\bb{F}_{q^{2}})$ as a group over $\bb{F}_{q}$, we also have the frobenious automorphism, so have in addition conjugations of $X\rightarrow Fr(X)$ and $X\rightarrow Fr(X)^{-t}$.

We will say that two automorphisms which are conjugate are of the same type. If $\sigma$ is an automorphism, we will call its type $\sigma$-type.

An important observation is that types are 'preserved' under restriction. For example if $\sigma$ is an automorphism of $GL_{n}$ which preserves $GL_{m}$ sitting inside (say, as the left upper block), then $\sigma|_{GL_{m}}$ has $\sigma$-type, even if $GL_{m}$ isn't preserved, but goes to a different block, then it would still be of $\sigma$-type (just conjugate it back to the upper left block).

Among the automorphisms, we especially care about involutions. Conjugation of $G$ yields isomorphic involutions, and conjugating by central elements does not change the involution. These two facts yield that for each type, there are a finite number of involutions:

\begin{itemize}
    \item For the $Id$-type, the conjugation matrix $A$ is either diagonal with $\pm 1$ on the diagonal, or, if the dimension is even, it can be a matrix $E$ that squares to a non-square scalar matrix. $G^{\sigma}$ in this case would be either $GL_{k}\times GL_{n-k}$ or $GL_{n/2}(\bb{F}_{q^{2}})$.

    \item For the $(-)^{-t}$-type, the conjugating matrix would either be symmetric or antisymmetric, giving $O,Sp$ of the corresponding form.

    \item For the $Fr$-type and $Fr(-)^{-t}$-types, conjugation does not give anything else, and we have  $GL_{n}(\bb{F}_{q})\subseteq GL_{n}(\bb{F}_{q^{2}})$ for the $Fr$-type, and $U$ for the $Fr(-)^{-t}$-type.
    
\end{itemize}

From now on we will use $\tau$ to denote $\sigma^{-1}$. We are concerned with proving that certain representations satisfy $\pi \cong \pi^{*,\sigma}$. Already from here we see that conjugation of $\sigma$ does not change the representation, so this condition is dependent only on the $\sigma$-type. For this reason, we will sometimes refer to this as $\sigma$-type invariant.

Since representations are determined by their characters, we can rephrase it as follows:

Let $\chi$ be the character of $\pi$, then $\pi \cong \pi^{*,\sigma}$ is the same as $\chi(g)=\chi(\sigma(g)^{-1})=\chi(\tau(g))$. For this reason, we will also call this property being $\tau$ invariant.

\begin{remark}
    Since $X,X^{t}$ are always conjugate, it is obvious that any representation is $(-)^{t}$ invariant, i.e., there is nothing to show for the $(-)^{-t}$-type. Similarly, being $Fr(-)^{-t}$-type  invariant is the same as being $Fr$ invariant.
\end{remark}



\subsection{Deligne-Lusztig}\label{sec:2.2}

We will only need the basics of Deligne-Lusztig induction. We will follow \cite[Chapter 7]{carter1993finite}.

Let $G$ be a group defined over $\bb{F}_{q}$, Let $T\subseteq G$ be an $\bb{F}_{q}$ defined torus, and $\theta \in T(\bb{F}_{q})^{*}$ be a character on the $\bb{F}_{q}$ points.

Choose a borel $B$ that contains $T$, which does not have to be defined over $\bb{F}_{q}$, denote by $U$ its unipotent radical, we now define:

$$ X = \{x\in G|Fr(x)x^{-1}\in U\}$$

And we will refer to this as the Lusztig variety, notice $G(\bb{F}_{q})\times T(\bb{F}_{q})$ acts on this variety by multiplication from the right and left.

\begin{definition}
    if $G,T,\theta$ are as before, we define $R_{T,\theta}$ as the virtual representation:

    $$R_{T,\theta} =  \sum_{i} (-1)^{i} H_{c}^{i}(X;\bb{Q}_{l})^{\theta}$$

    We call it the Deligne-Lusztig induction of $\theta$
\end{definition}

\begin{remark}
    There is some ambiguity in the definition as to the coefficients of $\theta$ and the choice of $l$, it is not of great importance.
    We can choose an isomorphism between $\bar{\bb{Q}_{l}}$ and $\bb{C}$ and consider these as complex characters. It can be shown that the character is independent of the choice.
\end{remark}

The following are a few facts on Deligne-Lusztig induction.

\begin{enumerate}
    \item $R_{T,\theta}$ is independent of the choice of $B$.
    \item Every irreducible representation of $GL_{n}(\bb{F}_{q})$ appears in some $R_{T,\theta}$
    \item If $\theta$ is in general position (i.e., it is not fixed by any non-trivial element of $W(T)$), then $R_{T,\theta}$ is an irreducible representation up to sign. For $ \Gl{n}{F}{q}$, if $T$ is anisotropic, then it will also be cuspidal.
    \item If $R_{T,\theta}$ and $R_{T',\theta'}$ share a common irreducible representation, then $\theta,\theta'$ are geometrically conjugate (See definition below).
\end{enumerate}

\begin{definition}
    if $T,\theta$ and $T',\theta'$ are as in the definition of Deligne-Lusztig induction, then we say that they are geometrically conjugate if there is $n$, and $f\in G(\bb{F}_{q^{n}})$ s.t. $T'= T^{f}$ and  $\theta(N(fxf^{-1})=\theta'(N(x))$, where $N$ is the norm map $T(\bb{F}_{q^{n}})\rightarrow T(\bb{F}_{q})$ or $T'(\bb{F}_{q^{n}})\rightarrow T'(\bb{F}_{q})$.
\end{definition}

In other words, we allow to pull back a character to a field extension using the norm map, and then we allow to conjugate by an element.

\begin{example}
    In $GL_{n}(\bb{F}_{q})$ the anisotropic torus is $\bb{F}_{q^{n}}^*$, It is split over $\bb{F}_{q^{n}}$, and the norm map will be:
    $$ \bb{F}_{q^{n}}^{*,n}\rightarrow \bb{F}_{q^{n}}^{*}$$
    $$ (x_{1},....,x_{n})\rightarrow x_{1}\operatorname{Fr}(x_{2})\operatorname{Fr}^{2}(x_{3})\cdots \operatorname{Fr}^{n-1}(x_{n-1})$$

    For the split torus, the norm map is:
    $$ \bb{F}_{q^{n}}^{*,n}\rightarrow \bb{F}_{q}^{*,n}$$
    $$ (x_{1},....,x_{n})\rightarrow (N(x_{1}),...,N(x_{n-1}),N(x_{n}))$$

    And conjugating allows us to reorder the coordinates.

    We see that characters pulled back from the split torus are $Fr$ invariant characters, while ones pulled from the anisotropic torus are ones of the form $(\phi, \operatorname{Fr}(\phi),\operatorname{Fr}^{2}(\phi) ... , \operatorname{Fr}^{n-1}(\phi))$, so we see a character of the anisotropic torus is geometrically conjugate to a character of the of the split torus iff it is $Fr$ invariant, and the character on the split torus would then be a diagonal character. In both cases, we see the characters are not in general position.

    This calculation can be extended in general to see that a character of a torus of $GL_{n}$ is in general position iff it is not geometrically conjugate to any other character that it is not conjugate to.
\end{example}

We will need the following result of Lusztig from \cite{Lusztig2007}.
\begin{theorem}[{{\cite[Theorem 3.5]{Lusztig2007}}}]
    Let $\rho$ be an irreducible representation of $G(\bb{F}_{q})$ which is $H-$distinguished. Say $\rho$ appear in $H^{i}_{c}(X)^{\theta}$ (Where $X$ is the Lusztig variety corresponding to some torus $T$). then there is some $f\in G(\bb{F}_{q^{n}})$ such that $T^{f}$ is $\sigma$ invariant, and that writing $\tilde{\theta}=\theta(N(f^{-1}tf))$ a character on $T^{f}(\bb{F}_{q^n})$, then $\tilde{\theta}(\sigma(t)) = \tilde{\theta} (t)^{-1}$.
\end{theorem}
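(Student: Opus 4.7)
The plan is to convert $\rho^{H}\neq 0$ into a geometric nonvanishing statement on the Deligne--Lusztig variety $X = X_{B,T}$, then use Lang--Steinberg together with the Deligne--Lusztig character formula to extract $(T^{f},\tilde\theta)$. Since $\rho$ is irreducible, $\rho\subseteq H_{c}^{i}(X)^{\theta}$, and $\rho^{H}\neq 0$, there is a nonzero class $\xi\in\bigl(H_{c}^{i}(X)^{\theta}\bigr)^{H}$; this is the geometric datum I would analyse.

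The key observation is that since $\sigma$ is defined over $\bb{F}_{q}$ it commutes with the Frobenius $Fr$, so $x\mapsto \sigma(x)$ induces an isomorphism $X_{B,T}\xrightarrow{\sim} X_{\sigma(B),\sigma(T)}$ intertwining the $G(\bb{F}_{q})\times T(\bb{F}_{q})$ action with its $\sigma$-twist. I would then introduce an auxiliary $\sigma$-twisted variety, for example $\tilde X = \{x\in G : \sigma(x)^{-1} Fr(x)\in U\}$, equipped with natural projections to $X$ and to $X_{\sigma(B),\sigma(T)}$, and realise the space $\bigl(H_{c}^{i}(X)^{\theta}\bigr)^{H}$ inside the cohomology of $\tilde X$ via a Künneth-type correspondence. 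Nonvanishing of $\xi$ then forces nonvanishing of a specific $(\theta,\sigma(\theta))$-isotypic piece of $H_{c}^{\bullet}(\tilde X)$.

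Next, I would apply Lang--Steinberg to $\sigma$ (viewed as a finite-order algebraic endomorphism of $G$ commuting with $Fr$) to produce $f\in G(\bar{\bb{F}_{q}})$ with $f^{-1}\sigma(f)\in N_{G}(T)$, forcing $T^{f}$ to be $\sigma$-stable; the rationality refinement $f\in G(\bb{F}_{q^{n}})$ for some $n$ comes from applying Lang--Steinberg to the commuting composition $\sigma\circ Fr^{n}$. Once $T^{f}$ is $\sigma$-stable, $\tilde\theta(t)=\theta\bigl(N(f^{-1}tf)\bigr)$ is a well-defined character of $T^{f}(\bb{F}_{q^{n}})$, and I would evaluate the nonvanishing from the previous step through the Deligne--Lusztig character formula on this $\sigma$-stable torus. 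The $H$-invariance of $\xi$ combined with the $\sigma$-equivariance from the first step forces the involution identity $\tilde\theta(\sigma(t))=\tilde\theta(t)^{-1}$ on $T^{f}(\bb{F}_{q^{n}})$.

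The main obstacle is the construction of $\tilde X$ and the associated correspondence: $\sigma$ does not preserve $X_{B,T}$, so a naive fixed-point argument on $X$ is unavailable, and one must instead design a $\sigma$-twisted Deligne--Lusztig variety whose $(\theta,\sigma(\theta))$-isotypic cohomology faithfully records $H$-invariance, while keeping careful track of the $T(\bb{F}_{q})$-grading pulled back through the geometric conjugation by $f$. Once this geometric set-up is in place the Lang--Steinberg step and the character-formula calculation are essentially formal and yield the stated identity.
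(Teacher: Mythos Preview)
The paper does not prove this theorem. It is quoted verbatim from \cite{Lusztig2007} (twice: once in the introduction as Theorem~\ref{luzTh} and once in \S\ref{sec:2.2}) and then immediately \emph{applied} to deduce the $\tau$-invariance statement (Theorem~\ref{LT}). So there is no ``paper's own proof'' to compare against; you are attempting to reprove a cited black box.

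On the merits of your sketch: the overall architecture --- pass to a $\sigma$-twisted Deligne--Lusztig variety, read off $H$-invariance cohomologically, then conjugate to a $\sigma$-stable torus --- is in the right spirit and is roughly what Lusztig does. But one step is wrong as written. You propose to ``apply Lang--Steinberg to $\sigma$'' to produce $f$ with $f^{-1}\sigma(f)\in N_{G}(T)$. Lang--Steinberg requires a surjective endomorphism with finite fixed-point set; an involution $\sigma$ has $G^{\sigma}=H$ positive-dimensional, so the Lang map $g\mapsto g^{-1}\sigma(g)$ is \emph{not} surjective (its image is only the set of $\sigma$-antiinvariant elements, and hitting $N_{G}(T)$ inside that set is exactly the nontrivial content). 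The correct move is to apply Lang--Steinberg to a genuine Frobenius such as $\sigma\circ Fr$ (which does have finite fixed points since $\sigma$ is defined over $\bb{F}_{q}$), or to argue via the classification of $\sigma$-stable maximal tori in symmetric spaces; you gesture at $\sigma\circ Fr^{n}$ for the rationality refinement, but the base existence of $f$ is already the issue.

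The other gap you yourself flag --- constructing $\tilde X$ and the correspondence that identifies $(H_{c}^{i}(X)^{\theta})^{H}$ with a suitable isotypic piece --- is the real work in Lusztig's argument, and your sketch does not yet supply it. Saying ``the $H$-invariance of $\xi$ combined with $\sigma$-equivariance forces $\tilde\theta(\sigma(t))=\tilde\theta(t)^{-1}$'' is the conclusion, not the mechanism; what is missing is the fixed-point or Lefschetz-type computation on $\tilde X$ that actually produces the character identity.
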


In particular this proves the following:
\begin{theorem} 
    Let $G$ be an algebraic group over $\bb{F}_{q}$, $T,\theta$ a maximal torus and a character on the rational points, $\sigma,H$ an involution and its fixed points.  If $R_{T,\theta}$ is $H-$distinguished, $(T,\theta)$ is not geometrically conjugate to any other pair (that it is not rationally conjugate to) and $\theta$ is in general position, then $R_{T,\theta}$ is $\tau$ invariant.
\end{theorem}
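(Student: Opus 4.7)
The plan is to derive the statement from \cref{luzTh} by recasting its conclusion as a geometric conjugacy between $(T,\theta)$ and a $\sigma$-twist of its dual pair, and then invoking the hypothesis that the geometric and rational conjugacy classes of $(T,\theta)$ coincide. Combined with standard functorial properties of Deligne--Lusztig induction, this will yield $R_{T,\theta} = R_{T,\theta}^{*,\sigma}$ as virtual characters, and general position will promote this to an isomorphism of irreducible representations.

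Applying \cref{luzTh} produces $f \in G(\bb{F}_{q^n})$ with $T^f := f^{-1}Tf$ being $\sigma$-invariant and $\tilde\theta(t) := \theta(N(ftf^{-1}))$ on $T^f(\bb{F}_{q^n})$ satisfying $\tilde\theta \circ \sigma = \tilde\theta^{-1}$. Setting $h := \sigma(f) f^{-1}$, the identity $\sigma(T^f) = T^f$ unfolds to $\sigma(T) = h T h^{-1}$, which means that $h$ witnesses a geometric conjugacy between $(\sigma(T), \theta\circ\sigma)$ and some pair $(T, \theta'')$ on $T$. To identify $\theta''$, I would spell out the defining relation $\theta''(N(x)) = (\theta\circ\sigma)(N(hxh^{-1}))$ for $x \in T(\bb{F}_{q^n})$: writing $x = fsf^{-1}$ with $s \in T^f(\bb{F}_{q^n})$, a short computation yields $\sigma(hxh^{-1}) = f\sigma(s) f^{-1}$, so the right-hand side becomes $\tilde\theta(\sigma(s)) = \tilde\theta(s)^{-1} = \theta(N(x))^{-1}$; surjectivity of the norm $T(\bb{F}_{q^n}) \to T(\bb{F}_q)$ then forces $\theta'' = \theta^{-1}$. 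Consequently $(\sigma(T), \theta\circ\sigma)$ is geometrically conjugate to $(T, \theta^{-1})$, and inverting characters on both sides (which preserves geometric conjugacy) gives that $(\sigma(T), \theta^{-1}\circ\sigma)$ is geometrically conjugate to $(T, \theta)$.

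The hypothesis on $(T,\theta)$ then upgrades this to $G(\bb{F}_q)$-conjugacy of the two pairs, so $R_{T,\theta} = R_{\sigma(T), \theta^{-1}\circ\sigma}$ as virtual characters. Using the functorial identities $R_{T,\theta}^* = R_{T, \theta^{-1}}$ and $R_{T,\theta}(\sigma(g)) = R_{\sigma(T), \theta\circ\sigma}(g)$ (both standard; the latter follows from $\sigma$-equivariance of the Lusztig variety construction, together with $\sigma^{-1}=\sigma$), one computes $R_{T,\theta}^{*,\sigma} = R_{\sigma(T), \theta^{-1}\circ\sigma}$, which combined with the previous step yields $R_{T,\theta} = R_{T,\theta}^{*,\sigma}$. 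Since $\theta$ is in general position, $R_{T,\theta}$ equals $\pm \pi$ for an irreducible $\pi$, and the equality of virtual characters forces $\pi \cong \pi^{*,\sigma}$. The main obstacle I expect is the bookkeeping in the second paragraph: the identities $\sigma(h) = h^{-1}$, $\sigma(f^{-1}Tf) = \sigma(f)^{-1}\sigma(T)\sigma(f)$, and the behavior of the norm under conjugation are easy to mix up, so one must verify carefully that the descent of $\tilde\theta \circ \sigma = \tilde\theta^{-1}$ to a relation on $T(\bb{F}_q)$ produces precisely $\theta^{-1}$ rather than some other twist.
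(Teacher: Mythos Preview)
Your proposal is correct and follows essentially the same route as the paper: Lusztig's theorem yields that $(T,\theta)$ and $(\sigma(T),\theta^{-1}\circ\sigma)$ are geometrically conjugate, the hypothesis upgrades this to rational conjugacy, and functoriality of Deligne--Lusztig induction under conjugation, dualizing, and twisting by $\sigma$ finishes the argument. The only difference is that the paper simply asserts the geometric conjugacy as the content of \cref{luzTh} (``In simpler words\dots''), whereas you carry out explicitly the descent from $\tilde\theta\circ\sigma=\tilde\theta^{-1}$ on $T^f(\bb{F}_{q^n})$ to the desired statement on $T(\bb{F}_q)$; your bookkeeping there is fine.
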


\begin{proof}
    Because there are no geometrically conjugate characters, it follows that $R_{T,\theta}$ is an irreducible representation (up to sign). If it is $H-$distinguished, then by Lusztig theorem $\theta$ and $\theta^{-1}\circ \sigma$ are geometrically conjugate, but $\theta$ is only geometrically conjugate to characters it is conjugate to, i.e. $\sigma(T)=T^{f}, \theta^{-1}\circ \sigma = \theta^{f}$ where $f\in G(\bb{F}_{q})$, therefore:
    $$ \bar{R_{T,\theta}}^{\sigma}=R_{\sigma(T),\theta^{-1}\circ \sigma}=R_{T^{f},\theta^{f}}=R_{T,\theta}$$

    Where the last equality follows since conjugation by an element of the group gives an isomorphism of everything in the definition of the Deligne-Lusztig induction.
\end{proof}



\subsection{Zelevinsky Theory}\label{sec:2.3}
Here we briefly review Zelevinsky theory describing Irreducible representations in terms of the cuspidal ones. We follow \cite{zel}.

\begin{definition}
A Hopf algebra over a comutative ring $K$ is a graded $K$ module $R=\bigoplus_{n\geq0} R_{n}$ with graded morphisms (over $K$) $m:R\otimes_{K} R\rightarrow R$, $m^*:R\rightarrow R\otimes_{K} R$, $e:K\rightarrow R$, $e^*:R\rightarrow K$ s.t $m,e$ turn $R$ to a ring, $m^*,e^*$ turns it into a co-algebra, and $m^*$ is a ring map.
\end{definition} 
In other words, a Hopf algebra is both an algebra and a co-algebra, and both structures commute.
\begin{definition}
A PSH-algebra is a Hopf algebra over $\mathbb{Z}$ together with a free homogeneous basis for $R$ (as an abelian group), s.t. $R_{0}\cong \mathbb{Z}$ with isomorphisms $e,e^*$, all the maps of the Hopf algebra take basis elements to a positive-sum of basis elements, and with respect to the inner product in which our basis is an orthonormal basis, $m,m^*$ and $e,e^*$ are adjoint pairs.
\end{definition}

The main example to keep in mind is $R_{n}=R(G_{n})$ where $G_{n}$ is a sequence of groups like $S_{n}$ or $GL_{n}(\bb{F}_{q})$, product will be a form of induction, while the co-product will be a form of restriction.

\begin{definition}
    A basis element of a PSH-algebra is called irreducible, an element $x$ s.t $m^*(x)=x\otimes 1 + 1 \otimes x$ is called primitive. 
\end{definition}
It is easy to see that in the example $R=\oplus_{n} R(GL_{n}(\bb{F}_{q}))$, primitive means cuspidal, so irreducible primitive means irreducible cuspidal.

The next two theorems classify PSH-algebras:

\begin{theorem}
Let R be a PSH-algebra, denote $\mathscr{C}$ as the set of irreducible primitive elements, then $R\cong \bigotimes_{\rho \in \mathscr{C}} R(\rho)$ where $R(\rho)$ is a PSH-algebra with only one irreducible primitive element, which is the sub PSH-algebra generated by $\rho$.  
\end{theorem}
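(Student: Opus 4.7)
The plan is to follow the standard Zelevinsky construction. For each irreducible primitive $\rho\in\mathscr{C}$, I would let $R(\rho)\subseteq R$ be the smallest graded $\mathbb{Z}$-submodule containing $1$ and $\rho$ that is closed under $m$ and $m^*$; concretely, this is the $\mathbb{Z}$-span of those irreducible basis elements of $R$ that occur in some iterated power of $\rho$. Each $R(\rho)$ is automatically a sub-PSH-algebra with its inherited basis. I would then introduce the multiplication map $\mu:\bigotimes_{\rho\in\mathscr{C}}R(\rho)\to R$ and show that it is an isomorphism of PSH-algebras, from which the statement that $\rho$ is the unique irreducible primitive of $R(\rho)$ follows formally.

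The first main step is to show $\mu$ is an isometry, i.e.\ that for distinct $\rho,\rho'\in\mathscr{C}$ and any irreducible basis elements $x\in R(\rho),\ y\in R(\rho')$, the product $xy$ is again an irreducible basis element, $xy=yx$, and distinct pairs give distinct products. The base case is a direct computation: using $m^*(\rho\rho')=m^*(\rho)m^*(\rho')=(\rho\otimes 1+1\otimes\rho)(\rho'\otimes 1+1\otimes\rho')$ together with the adjointness $\langle ab,c\rangle=\langle a\otimes b,m^*(c)\rangle$, one obtains $\langle\rho\rho',\rho\rho'\rangle=1=\langle\rho\rho',\rho'\rho\rangle$, and positivity of the basis expansion then forces $\rho\rho'=\rho'\rho$ to be a single irreducible. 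The general case proceeds by induction on total degree using the same two tools, together with the fact that $m^*(R(\rho))\subseteq R(\rho)\otimes R(\rho)$, which keeps the different ``species'' from interfering on either side of each inner product. This identifies the image of $\mu$ with the $\mathbb{Z}$-span of a subset of the orthonormal basis of $R$.

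Surjectivity would then be obtained by induction on degree. An irreducible $x\in R_n$ that is primitive lies in $\mathscr{C}$ and hence trivially in the image. Otherwise the reduced coproduct $m^*(x)-x\otimes 1-1\otimes x$ is a nonzero positive combination of tensors of lower-degree basis elements, so by adjointness there exist irreducibles $u,v$ of positive degree less than $n$ with $\langle uv,x\rangle>0$. By the induction hypothesis $u,v\in\mathrm{Im}(\mu)$, hence $uv\in\mathrm{Im}(\mu)$; since $\mu$ is isometric its image is spanned by a subset of the irreducible basis of $R$, and the basis element $x$, appearing with positive coefficient in the expansion of $uv$, must coincide with one of these and therefore lie in the image.

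The step I expect to be the most delicate is the inductive extension of the base-case isometry computation to arbitrary irreducibles of $R(\rho)$ and $R(\rho')$: one must repeatedly invoke $m^*(xy)=m^*(x)m^*(y)$ together with the containment $m^*(R(\rho))\subseteq R(\rho)\otimes R(\rho)$ so that mixed species never appear, and then use positivity at each stage to upgrade numerical equalities of inner products into honest equalities of basis elements.
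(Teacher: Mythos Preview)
The paper does not give a proof of this theorem at all: it is stated in the preliminaries section \S\ref{sec:2.3} as part of a review of Zelevinsky theory, with the sentence ``We follow \cite{zel}'' and no further argument. So there is nothing in the paper to compare your proposal against; the result is simply quoted as background.

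That said, your outline is essentially Zelevinsky's original argument and is sound in its architecture. One point to be careful about: the assertion $m^*(R(\rho))\subseteq R(\rho)\otimes R(\rho)$ is not automatic from your ``concrete'' description of $R(\rho)$ as the span of irreducibles occurring in some $\rho^n$; it requires knowing that if $\omega$ occurs in $\rho^n$ and $\langle \pi\otimes\tau,m^*(\omega)\rangle>0$ then $\pi,\tau$ each occur in a power of $\rho$. Zelevinsky handles this by first establishing that every irreducible has a well-defined monomial ``type'' in the primitives (using that primitives are orthogonal to all decomposables, together with positivity), and only then identifies $R(\rho)$ with the span you describe. Your two descriptions of $R(\rho)$ do agree, but the equivalence is part of what has to be proved, not an input.
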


\begin{theorem}
All PSH-algebras with one irreducible primitive element of degree $1$ are isomorphic, This unique algebra has exactly two automorphisms. 
\end{theorem}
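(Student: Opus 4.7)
The plan is to identify $R$ explicitly with the ring of symmetric functions (equivalently $\bigoplus_{n\geq 0} R(S_n)$) and then classify its PSH-automorphisms. Let $x$ denote the unique irreducible primitive of degree $1$; since $R_0\cong\mathbb{Z}$ and degree reasons force $R_1=\mathbb{Z}x$, all structure should be extractable from $x$. First, using that $m^*$ is a ring map with $m^*(x)=x\otimes 1+1\otimes x$, I would compute $m^*(x^n)=\sum_{k=0}^{n}\binom{n}{k}\,x^k\otimes x^{n-k}$ and then exploit the adjointness $\langle ab,c\rangle=\langle a\otimes b,m^*(c)\rangle$ to deduce $\langle x^n,x^n\rangle=n!$; in particular $x^n$ is not irreducible for $n\geq 2$ and must decompose nontrivially into basis elements.

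Next I would build a positive basis of $R$ indexed by partitions. Writing $x^n=\sum_\lambda c_\lambda y_\lambda$ in the positive basis, positivity applied to identities such as $\langle y_\lambda,x\cdot y_\mu\rangle=\langle m^*(y_\lambda),x\otimes y_\mu\rangle$ yields a branching/Pieri-type recursion. Inductively this should force the irreducibles in degree $n$ to be in bijection with $p(n)$ combinatorial labels (partitions), with the $c_\lambda$ matching the dimensions of the corresponding $S_n$-irreducibles. Exactly two irreducibles will appear with multiplicity one in $x^n$; call them $h_n$ and $e_n$, corresponding to the trivial and sign representations. The target structural output is an isomorphism $R\cong\mathbb{Z}[h_1,h_2,\ldots]$ as a graded ring, and symmetrically $R\cong\mathbb{Z}[e_1,e_2,\ldots]$, which proves uniqueness of $R$ up to isomorphism.

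For the automorphism count, any PSH-automorphism $\phi$ permutes irreducibles within each degree, preserves the inner product, and commutes with both $m$ and $m^*$. Positivity together with uniqueness of the degree-one irreducible forces $\phi(x)=x$. Since the only basis elements appearing with coefficient one in $x^n$ are $h_n$ and $e_n$, $\phi$ must either fix the pair $\{h_n,e_n\}$ pointwise or swap its two members; compatibility with the products $h_ah_b$, $e_ae_b$ propagated by the Hopf axioms forces a single uniform choice across all degrees, yielding exactly two automorphisms: the identity and the classical involution swapping every $h_n$ with $e_n$. The main obstacle will be the inductive step that produces exactly $p(n)$ irreducibles and verifies the polynomial ring structure purely from the PSH axioms, as this essentially amounts to reconstructing the combinatorics of symmetric functions from scratch using only positivity, self-adjointness, and the Hopf compatibility.
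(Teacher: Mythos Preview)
The paper does not actually prove this theorem; it is stated in \S\ref{sec:2.3} as part of a review of Zelevinsky's theory, with attribution to \cite{zel}. The only proof-related remark the paper makes is the sentence immediately following: ``In the proof it is shown that if $\rho$ is the irreducible primitive element, then $\rho^{2}$ is the sum of two irreducible elements $x_{2},y_{2}$, the non uniqueness of the isomorphism comes from the choice between these two, in other words the non-trivial automorphism interchanges these two.'' So there is no in-paper proof to compare against.

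That said, your sketch is the standard Zelevinsky argument and is consistent with the one detail the paper records. One small difference of emphasis: the paper (following Zelevinsky) locates the entire ambiguity already in degree $2$, via the choice between $x_2$ and $y_2$, whereas you phrase the dichotomy at every degree $n$ via $h_n$ versus $e_n$ and then argue that a uniform choice is forced. These viewpoints are equivalent once the polynomial-ring structure is in hand, but the degree-$2$ formulation is what the paper actually invokes downstream in the proof of Theorem~\ref{ZelRes}, where it suffices to check that the involution fixes $x_2$ and $y_2$ individually (detected there via the Whittaker/generic property). Your honest acknowledgment that the inductive reconstruction of the partition-indexed basis from the PSH axioms is the real work is accurate; that is precisely the content Zelevinsky develops and which the paper simply cites.
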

\begin{remark}
    The condition on degree is technical, If the irreducible primitive element is of degree $n$, then it is the same ring but degrees multiplied by $n$.
\end{remark}

In the proof it is shown that if $\rho$ is the irreducible primitive element, then $\rho^{2}$ is the sum of two irreducible elements $x_{2},y_{2}$, the non uniqueness of the isomorphism comes from the choice between these two, in other words the non-trivial automorphism interchanges these two.

specializing the classifications theorem to the PSH-algebra coming from $GL_{n}$ we get:
\begin{theorem} \label{ZelPind}
    Every Irreducible representation of $GL_{n}(\bb{F}_{q})$ is of the form $i_{L}^{GL_{n}}(\pi)$ where $L=\prod_{i} GL_{n_{i}}$, $\pi = \otimes_{i} \pi_{i}$ and $\pi_{i}$ is a $\rho_{i}$-primary representation for distinct $\rho_{i}$.
\end{theorem}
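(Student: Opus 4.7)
The plan is to simply specialize the two general PSH classification theorems to the PSH-algebra $R = \bigoplus_{n \ge 0} R(GL_n(\bb{F}_q))$, and translate the purely Hopf-algebraic statement back into representation-theoretic language.

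First I would unwind the PSH structure on $R$. The product $m\colon R \otimes R \to R$ is, on basis elements, parabolic induction $i_L^{GL_n}$ from the block-diagonal Levi $L = GL_{n_1} \times GL_{n_2}$, and the coproduct $m^*$ is the corresponding Jacquet restriction. With respect to the inner product making irreducible representations an orthonormal basis, $m$ and $m^*$ are adjoint by Frobenius reciprocity. The irreducible primitive elements are exactly the irreducible cuspidal representations, since primitivity says $m^*(\rho) = \rho \otimes 1 + 1 \otimes \rho$, i.e. all proper Jacquet restrictions vanish.

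Next I would apply the first classification theorem to obtain an isomorphism of PSH-algebras
\[
R \;\cong\; \bigotimes_{\rho \in \mathscr{C}} R(\rho),
\]
where $R(\rho) \subseteq R$ is the sub-PSH-algebra generated by $\rho$. The irreducible basis elements of a tensor product of PSH-algebras are exactly the tensor products of irreducible basis elements from the factors (almost all equal to $1$), since the basis behaves well under the tensor product structure and orthonormality. Thus every irreducible $\pi \in R_n$ corresponds to a tensor $\pi_1 \otimes \cdots \otimes \pi_k$ with $\pi_i \in R(\rho_i)_{n_i}$ an irreducible basis element, for some distinct cuspidals $\rho_1,\dots,\rho_k$ and with $\sum n_i = n$.

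Now I would translate back: the isomorphism $R \cong \bigotimes_\rho R(\rho)$ is realized by the multiplication map, so the image of $\pi_1 \otimes \cdots \otimes \pi_k$ in $R$ is the product $\pi_1 \cdots \pi_k$, which by the identification of $m$ with parabolic induction equals $i_L^{GL_n}(\pi_1 \otimes \cdots \otimes \pi_k)$ with $L = \prod_i GL_{n_i}$. It remains only to identify the irreducible basis of $R(\rho)$ with the $\rho$-primary irreducible representations. By definition, a $\rho$-primary irreducible is a subrepresentation of $i(\rho^{\otimes m})$ for some $m$, i.e.\ of $\rho^m$ in the ring $R$, which is exactly to say it lies in the sub-PSH-algebra generated by $\rho$. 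Thus the irreducible basis of $R(\rho)$ is precisely the set of $\rho$-primary irreducibles, and the theorem follows.

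The only step that is not pure bookkeeping is the decomposition $R \cong \bigotimes_\rho R(\rho)$, but that is already assumed from \cite{zel} as stated in the excerpt. Once it is in hand, the main point to be careful about is the dictionary between the abstract Hopf-algebra product and honest parabolic induction, together with the fact that the product of irreducible basis elements coming from distinct $R(\rho_i)$ remains an irreducible basis element of $R$ (as opposed to a genuine sum), which is where the orthonormality and positivity axioms of a PSH-algebra do the real work.
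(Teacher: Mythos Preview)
Your proposal is correct and follows exactly the approach the paper intends: the paper does not give an explicit proof but simply says ``specializing the classification theorems to the PSH-algebra coming from $GL_n$ we get'' \cref{ZelPind}, and what you have written is precisely that specialization spelled out. The identification of primitives with cuspidals, the tensor decomposition $R \cong \bigotimes_\rho R(\rho)$, and the translation of the abstract product back to parabolic induction are all as in \cite{zel} and match the paper's implicit argument.
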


The final thing we need from this theory is the following result:

\begin{theorem}\label{ZelRes}
    If $\rho$ is a cuspidal representation, invariant under an involution type, then so is any $\rho$-isotopic representation.
\end{theorem}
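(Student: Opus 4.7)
The plan is to promote the map $\pi \mapsto \pi^{*,\sigma}$ to a PSH-algebra automorphism of the Zelevinsky algebra $R = \bigoplus_n R(GL_n(\mathbb{F}_{q^m}))$, and then to exploit the uniqueness theorem for PSH-algebras with a single irreducible primitive element.

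First I would define $\Phi : R \to R$ on irreducibles by $\Phi(\pi) = \pi^{*,\sigma}$ and extend linearly, and verify that $\Phi$ is a PSH-algebra automorphism. This is a standard check: $\Phi$ is a bijection on the distinguished basis of irreducibles, and it commutes with parabolic induction (contragredient converts induction from $P$ to induction from the opposite parabolic $P^-$, which for $GL_n$ is conjugate to $P$, and the further $\sigma$-twist sends induction from $P$ to induction from $\sigma^{-1}(P)$, again conjugate to $P$ in $GL_n$). By Frobenius reciprocity, $\Phi$ then also commutes with the coproduct. Once this is established, a PSH-automorphism must permute irreducible primitive elements, so by the Zelevinsky decomposition $R \cong \bigotimes_\rho R(\rho)$ we obtain $\Phi(R(\rho)) = R(\rho^{*,\sigma})$. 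The hypothesis that $\rho$ is invariant under the involution type gives $\rho^{*,\sigma} \cong \rho$, so $\Phi$ restricts to a PSH-algebra automorphism of $R(\rho)$.

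By the classification theorem, a PSH-algebra with a single irreducible primitive element has exactly two automorphisms, so $\Phi|_{R(\rho)}$ is either the identity or the non-trivial involution $\omega$ that interchanges the two irreducible summands $x_2, y_2$ of $\rho^2$. The crux of the argument is to rule out $\omega$. The main obstacle here is producing a numerical invariant that distinguishes the two automorphisms; the cleanest choice is the character degree. Namely, $\Phi$ preserves the dimension of every irreducible, since both contragredient and precomposition with an automorphism preserve dimension, whereas $\omega$ does not: the two components of $\rho^2$ are the $\rho$-analogues of the trivial and Steinberg constituents of $\mathrm{Ind}(\rho \otimes \rho)$ from the parabolic with Levi $GL_d \times GL_d$, and their dimensions differ by a power of $q^d$ (with $d = \deg \rho$), exactly as in the baby case $\rho = \mathbf{1}_{GL_1}$ where the dimensions are $1$ and $q$. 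This contradicts $\Phi|_{R(\rho)} = \omega$, forcing $\Phi|_{R(\rho)} = \mathrm{id}$ and hence $\pi^{*,\sigma} \cong \pi$ for every $\rho$-isotopic irreducible $\pi$, as desired.
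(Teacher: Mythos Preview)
Your argument follows the same overall strategy as the paper's proof: promote $\pi\mapsto\pi^{*,\sigma}$ to a PSH-automorphism $\Phi$ of $R$, observe that the hypothesis $\rho^{*,\sigma}\cong\rho$ forces $\Phi$ to restrict to an automorphism of the single-primitive PSH-algebra $R(\rho)$, and then invoke the classification theorem to conclude that $\Phi|_{R(\rho)}$ is either the identity or the unique nontrivial automorphism $\omega$ swapping $x_2$ and $y_2$. The only substantive difference is in the invariant you use to rule out $\omega$. The paper uses the Whittaker property: exactly one of $x_2,y_2$ is generic, and $\pi\mapsto\pi^{*,\sigma}$ sends generic representations to generic representations (for a possibly different Whittaker datum, which is irrelevant since the torus acts transitively on nondegenerate characters). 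You instead use dimension: $\Phi$ manifestly preserves $\dim$, whereas $x_2$ and $y_2$ have dimensions differing by a factor of $q^d$ (the $\rho$-analogue of the trivial/Steinberg dichotomy in $GL_2$).

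Both invariants work. Your dimension argument is arguably more elementary in that it avoids Whittaker models altogether, but it does rely on knowing that the two constituents of $\mathrm{Ind}_{GL_d\times GL_d}^{GL_{2d}}(\rho\otimes\rho)$ have unequal dimensions; this follows from the Howlett--Lehrer description of the Hecke algebra as $H_2(q^d)$ or from Green's degree formula, and you should cite one of these explicitly rather than only pointing to the $d=1$ case. The paper's Whittaker argument trades this for the (also classical) multiplicity-one statement for Whittaker functionals on $\mathrm{Ind}(\rho^{\otimes k})$.
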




\begin{proof}
First, let us notice that either type gives an automorphism of $\oplus R(GL_{n})$.


It is clear then that since $\rho$ is mapped to itself, $R(\rho)$ is mapped to $R(\rho)$, so it is enough to show that $x_{2},y_{2}$ remain the same. The two can be distinguished by the Whittaker character, if we take any $\phi:\bb{F}_{q}\rightarrow \bb{C}^*$, we can extend it to a character of the unipotent radical of the borel, by setting $\phi(U)=\sum_{i} \phi(a_{i,i+1})$. Then, for any representation, we can ask whether there is a $\phi$-equivariant vector. 

It is classical that if $\rho$ is a cuspidal irreducible representation, then $i (\rho^k)$ has a one dimensional subspace of $\phi$-equivariant vectors, so exactly one of the irreducible representations inside of this have such a vector, call this representation the Whittaker one.

An important observation is that using the torus, which normalizes the unipotent radical, we can conjugate $\phi$ to $\phi'$ for any different choice, so the Whittaker representation is independent of the choice.

Now we only need to observe that for any involution type, if $\pi$ is Whittaker with character $\phi$, then $\pi^{*,\sigma}$ is Whittaker with a different character.

So if $y_{2}$ is the Whittaker representation, then $y_{2}^{*,\sigma}$ is also Whittaker, and since it is either $x_{2}$ or $y_{2}$, it must be itself, and we are done.


\end{proof}



\section{The Cuspidal Case}\label{sec:3}

\subsection{Interaction with Restriction of Scalars}\label{sec:3.1}
In this section we prove \cref{IR}:

\begin{theorem}
    The set $T,\theta$ of a maximal torus and a character of its rational points, is in bijection for  $G(\bb{F}_{q^n})$ as a group over $\bb{F}_{q^n}$, and for the restriction of scalars of $G$ to $\bb{F}_{q}$ as a group over $\bb{F}_{q}$. $R_{T,\theta}$ for corresponding elements are the same up to sign, whether we regard $G$ as a group over $\bb{F}_{q^n}$ or do restriction of scalars and consider it as a group over $\bb{F}_{q}$.
\end{theorem}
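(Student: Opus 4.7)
The strategy is a three-layer comparison of the Deligne--Lusztig data for $G$ regarded over $\bb{F}_{q^n}$ versus its restriction of scalars $\widetilde{G} := \mathrm{Res}_{\bb{F}_{q^n}/\bb{F}_q} G$ regarded over $\bb{F}_q$: first the underlying finite groups, then the pairs $(T,\theta)$, and finally the Deligne--Lusztig varieties themselves. The first layer is immediate from the universal property of restriction of scalars: $\widetilde{G}(\bb{F}_q) = G(\bb{F}_{q^n})$, so both constructions produce virtual representations of the same abstract group.

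For the second layer, one passes to $\bar{\bb{F}_q}$, where $\widetilde{G} \cong G^n$ with twisted Frobenius $F'(y_1,\ldots,y_n) = (F(y_n),F(y_1),\ldots,F(y_{n-1}))$, $F$ being the $q$-power Frobenius on $G$. An $F'$-stable maximal torus of $G^n$ must take the form $T\times F(T)\times\cdots\times F^{n-1}(T)$ with $F^n(T)=T$, so it is uniquely determined by an $\bb{F}_{q^n}$-rational maximal torus $T$ of $G$. Passing to $F'$-fixed points gives $\widetilde T^{F'} \cong T^{F^n}$, under which the characters on either side tautologically correspond.

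The third layer carries the real content. Choose a Borel $B \supseteq T$ of $G$ with unipotent radical $U$, set $\widetilde B = B \times F(B) \times \cdots \times F^{n-1}(B)$ with radical $\widetilde U$, and write out the defining conditions for $X_{\widetilde G} = \{x \in G^n : F'(x)x^{-1} \in \widetilde U\}$ componentwise. Given $x_1$ and a free tuple $(u_2,\ldots,u_n) \in \prod_{i=1}^{n-1} F^i(U)$, the conditions for indices $i\geq 2$ force $x_{i+1} = u_{i+1}^{-1} F(x_i)$, an iterated substitution yielding $x_n = v\cdot F^{n-1}(x_1)$ with $v \in F^{n-1}(U)$. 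The remaining condition $F(x_n)x_1^{-1} \in U$ then becomes $F(v)\cdot F^n(x_1)x_1^{-1} \in U$; using $F^n(U)=U$ (the rationality assumption on $B$, see below), $F(v)\in U$, so this simplifies to $F^n(x_1)x_1^{-1} \in U$, i.e.\ $x_1 \in X_G$. Thus projection to the first factor exhibits $X_{\widetilde G} \to X_G$ as a trivial affine-space bundle of relative dimension $(n-1)\dim U$, equivariant for the identified $G^{F^n}\times T^{F^n}$-action. Such a bundle shifts $H^\bullet_c$ by an even degree (modulo an irrelevant Tate twist), so the alternating sums match and $R_{T,\theta}$ agrees in the two settings up to the global sign convention in its definition.

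The main obstacle is precisely the step $F^n(U)=U$, which is the assertion that the Borel $B$ may be chosen $\bb{F}_{q^n}$-rational; this fails in general, with anisotropic tori in $GL_2$ as the standard counterexample. I would handle this either by allowing $\widetilde B$ to have unrelated factors $B_i \supseteq F^{i-1}(T)$ chosen adaptively and invoking the independence of $R_{T,\theta}$ on the choice of Borel, or by bypassing the geometric comparison altogether: compute the character of $R_{T,\theta}$ on both sides via the Deligne--Lusztig character formula, which on regular semisimple classes depends only on the conjugation data preserved under the identifications of Layers 1--2. For the range $n \leq 2$ actually used in this paper, a direct case-by-case check over the Weyl classes of tori suffices.
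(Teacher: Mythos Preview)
Your three-layer comparison matches the paper's argument almost line for line: the same description of $\widetilde G$ over $\bar{\bb{F}}_q$ as $G^n$ with cyclically shifted Frobenius, the same identification of $F'$-stable maximal tori with $F^n$-stable maximal tori of $G$, and the same idea of projecting $X_{\widetilde G}$ onto one coordinate to obtain an affine-space bundle over a Deligne--Lusztig variety for $G$ over $\bb{F}_{q^n}$. The only real difference is \emph{which} coordinate you project to. You project to $x_1$ and are then forced to demand $F^n(U)=U$ in order to recognise the image as $X_G$; the paper instead projects to the last coordinate $x_n$, and the telescoping identity
\[
Fr^n(x_n)\,x_n^{-1} \;=\; Fr^{n-1}\bigl(Fr(x_n)x_1^{-1}\bigr)\cdot Fr^{n-2}\bigl(Fr(x_1)x_2^{-1}\bigr)\cdots Fr(x_{n-1})x_n^{-1}
\]
lands every factor in $Fr^{n-1}(U)$ with no hypothesis whatsoever on $B$. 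The target is then $L_{\bb{F}_{q^n}}^{-1}(Fr^{n-1}(U))$, the Deligne--Lusztig variety for the Borel $Fr^{n-1}(B)$ rather than $B$, and independence of $R_{T,\theta}$ from the choice of Borel --- your proposed fix (a), and item (1) on the paper's list of basic facts --- closes the argument. So the ``main obstacle'' you flag is not a genuine gap but an artifact of projecting to the wrong end; switch to $x_n$ and it disappears, making your workarounds (b) and the case-check for $n\le 2$ unnecessary.
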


\begin{proof}
    This is straightforward.

    First, let us consider this in the following way, over the algebraic closure, the restriction of $G$ is $G^{n}$, and the Frobenius isomorphism is:
    $$  (x_{1},....,x_{n})\rightarrow (Fr(x_{n}),Fr(x_{1}),...,Fr(x_{n-1}))$$
    where $Fr$ is the Frobenius of $G$ over $\bb{F}_{q}$. 

    With this mind, let us consider what a maximal torus of the restriction is, it has to be of the form $T_{1}\times T_{2}\times....\times T_{n}$, and for it to be defined over $\bb{F}_{q}$ it needs to be preserved by Frobenius, which means that $T_{i+1}=Fr(T_{i})$, and $Fr^{n}(T_{1})=T_{1}$, i.e. its just the restriction of a maximal torus of $G$ over $\bb{F}_{q^n}$.

    This shows the bijection of tori, clearly characters of their rational points are the same, as the rational points don't change.

    Now let us consider $R_{T,\theta}$.
    choosing a borel of $G$ that contains $T$, $B\times Fr(B) \times ... \times Fr^{n-1}(B)$ is a borel of the restriction of $G$ that contains the restriction of $T$. the unipotent radical is of course $ U \times Fr(U) \times ... \times Fr^{n-1}(U)$, and then the Lusztig variety in the definition of $R_{T,\theta}$ for the restriction group is:
    $$L_{\bb{F}_{q}}^{-1}(U \times Fr(U) \times ... \times Fr^{n-1}(U)) = \{(x_{1},x_{2},...,x_{n})|Fr(x_{i})x_{i+1}^{-1} \in Fr^{i}(U), Fr(x_{n})x_{1}^{-1} \in U\}$$
    Now consider the map:
    $$ L_{\bb{F}_{q}}^{-1}(U \times Fr(U) \times ... \times Fr^{n-1}(U)) \rightarrow L_{\bb{F}_{q^n}}^{-1}(Fr^{n-1}(U))$$
    $$ (x_{1},x_{2},....,x_{n})\rightarrow x_{n}$$
    indeed it is well defined since: $$Fr^{n}(x_{n})x_{n}^{-1} =Fr^{n-1}(Fr(x_{n})x_{1}^{-1}) \cdot Fr^{n-2}(Fr(x_{1})x_{2}^{-1})\cdot ... \cdot Fr(Fr(x_{n-2}) \cdot x_{n-1}^{-1})\cdot Fr(x_{n-1})x_{n}^{-1}$$
    which is indeed in $Fr^{n-1}(U)$.

    The fibers of this map are easily seen to be $U\times Fr(U)\times Fr^{2}(U)\times ... \times Fr^{n-2}(U)$ (it is in fact a bundle), For $G=GL_{n}$, $U$ is an affine space, so this map induce an isomorphism on compactly supported cohomology up index, the map also clearly commutes with the action of $G(\bb{F}_{q^{n}})\times T(\bb{F}_{q^n})$ so we get that both $R_{T,\theta}$ are indeed the same (up to sign).

\end{proof}

\subsection{Cuspidal Representations of $GL_{n}$}\label{sec:3.2}
Here we prove the following well known theorem

\begin{theorem}\label{GlCusp}
    Every cuspidal representation of $GL_{n}(\bb{F}_{q})$ is of the form $R_{T,\theta}$ where $T$ is an anisotropic torus, and $\theta$ is a character in general position.
\end{theorem}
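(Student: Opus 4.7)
The plan is to prove the claim by a cardinality argument: construct an injection from $W$-orbits of regular characters of the anisotropic torus into the set of cuspidal irreducibles, and then verify that both sides have the same cardinality, so the injection is a bijection.

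Fix an $F$-stable anisotropic maximal torus $T \subset G = GL_{n}$, so that $T(\bb{F}_{q}) \cong \bb{F}_{q^{n}}^{*}$ and the rational Weyl group $W := N_{G}(T)^{F}/T^{F}$ is cyclic of order $n$, generated by the action of the geometric Frobenius. For each $\theta$ in general position, the virtual representation $\pm R_{T,\theta}$ is irreducible and cuspidal, by the third listed fact about Deligne--Lusztig induction. The standard orthogonality relation
$$ \InPr{R_{T,\theta}}{R_{T,\theta'}} = |\{w\in W : w\theta = \theta'\}| $$
then shows that two regular characters produce isomorphic representations (up to sign) iff they lie in a common $W$-orbit. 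Hence $(T,\theta) \mapsto \pm R_{T,\theta}$ descends to a well-defined injective map
$$ \Phi \colon \{\theta \colon T(\bb{F}_{q})\to \bb{C}^{\times} \text{ regular}\}/W \hookrightarrow \operatorname{Irr}_{\text{cusp}}(GL_{n}(\bb{F}_{q})). $$

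Next I would count both sides. A character of $\bb{F}_{q^{n}}^{*}$ fails to be regular exactly when it factors through the norm $\operatorname{Nm}_{\bb{F}_{q^{n}}/\bb{F}_{q^{d}}}$ for some proper divisor $d \mid n$, $d < n$. Möbius inversion on the divisor lattice then gives $\sum_{d \mid n}\mu(n/d)(q^{d}-1)$ regular characters, and dividing by $|W| = n$ yields $\tfrac{1}{n}\sum_{d \mid n}\mu(n/d)(q^{d}-1)$ orbits. Classically (due to Green) the cardinality of $\operatorname{Irr}_{\text{cusp}}(GL_{n}(\bb{F}_{q}))$ is the same quantity. Since $\Phi$ is an injection between finite sets of equal cardinality, it must be a bijection, which is exactly the claim.

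The main obstacle is the classical count of $|\operatorname{Irr}_{\text{cusp}}(GL_{n}(\bb{F}_{q}))|$, which I would take as input from Green's parametrization of $\operatorname{Irr}(GL_{n}(\bb{F}_{q}))$; it can also be extracted from the Harish-Chandra generating-function identity relating the total number of irreducibles of $GL_{n}$ to cuspidal counts for smaller ranks. A more direct route avoiding any enumeration would combine the parabolic-induction identity $R_{T'}^{G}\theta' = \operatorname{Ind}_{P}^{G} R_{T'}^{L}\theta'$ (for $T'$ contained in a rational Levi $L$ of a rational parabolic $P$) with the second listed fact to force any cuspidal to appear in some $R_{T,\theta}$ with $T$ anisotropic; but ruling out non-regular $\theta$ directly for anisotropic $T$ appears to require more work than the counting argument, so the counting proof is cleaner.
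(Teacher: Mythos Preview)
Your proposal is correct and follows essentially the same route as the paper: both produce the injection $\{(T,\theta)\text{ regular}\}/W \hookrightarrow \operatorname{Irr}_{\text{cusp}}$ via the Deligne--Lusztig inner product formula, and both conclude by a cardinality match. The only difference is in how the target is counted: you invoke Green's parametrization (or the Harish-Chandra generating-function identity) as a black box, whereas the paper carries out that identity explicitly as an induction on $n$, counting conjugacy classes of $GL_{n}(\bb{F}_{q})$ and subtracting off the non-cuspidal contributions coming from proper Levis.
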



This is considered classical, it is directly implied by Lusztig general classification, but it is much easier, and can be deduced from \cite{Deligne-Lustig} theorem 6.8:
\begin{theorem}[{\cite[Theorem 6.8]{Deligne-Lustig}}]
    If $T,T'\subseteq G$ are maximal tori, and $\theta, \theta'$ are characters of their $\bb{F}_{q}$ points, then:
    $$ \left <R_{T,\theta},R_{T',\theta'}\right >_{G(\bb{F}_{q})} = |\{ w\in W(T,T')|\theta'(x^{w})=\theta(x)\}|$$
    Where $W(T,T')$ is $T$-orbits that conjugate $T$ to $T'$. 
\end{theorem}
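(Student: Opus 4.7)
The plan is to reduce the inner product to a geometric fixed-point count on the product Lusztig variety $X \times X'$ via the Lefschetz trace formula, and then use the Bruhat decomposition to organize the count by elements of $W(T,T')$. This is the classical $\ell$-adic argument; the paper of Deligne--Lusztig does exactly this.

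First, I would extract the standard character formula for Deligne--Lusztig virtual characters,
\[
R_{T,\theta}(g) = \frac{1}{|T(\bb{F}_{q})|} \sum_{t \in T(\bb{F}_{q})} \theta(t)^{-1} \mathcal{L}(g,t), \quad \mathcal{L}(g,t) := \sum_i (-1)^i \mathrm{Tr}\bigl((g,t)^* \mid H^i_c(X)\bigr),
\]
obtained by isolating the $\theta$-isotypic component on the $T(\bb{F}_q)$-side of $\sum_i (-1)^i H^i_c(X)$. Plugging this and its analogue for $R_{T',\theta'}$ into the inner product, and using $\overline{\theta'(t')} = \theta'(t')^{-1}$, yields
\[
\InPr{R_{T,\theta}}{R_{T',\theta'}}_{G(\bb{F}_q)} = \frac{1}{|G(\bb{F}_q)|\,|T(\bb{F}_q)|\,|T'(\bb{F}_q)|} \sum_{g,t,t'} \theta(t)^{-1} \theta'(t')^{-1} \mathcal{L}(g,t)\,\overline{\mathcal{L}(g,t')}.
\]

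Next, I would convert the $g$-sum into a geometric count. Using Poincar\'e duality to rewrite $\overline{\mathcal{L}(g,t')} = \mathcal{L}(g^{-1}, t'^{-1})$, together with the K\"unneth formula on $X \times X'$, the triple combination collapses (by Lefschetz applied to the diagonal $G(\bb{F}_q)$-action) to the alternating trace of $(t, t'^{-1})$ on $H^*_c\bigl((X \times X')/G(\bb{F}_q)\bigr)$. The heart of the proof is then a Bruhat-decomposition analysis of this quotient: recalling $X = L^{-1}(U)$, $X' = L^{-1}(U')$ with $L(g) = \mathrm{Fr}(g)g^{-1}$, one stratifies $(X \times X')/G(\bb{F}_q)$ by the double cosets $B \backslash G / B'$ into which $x^{-1}x'$ falls. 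After imposing the unipotent conditions $L(x) \in U$, $L(x') \in U'$ and Frobenius compatibility, the surviving strata are indexed exactly by $W(T,T')$, and each stratum is an affine bundle, contributing $1$ in each degree in the alternating sum. For the stratum indexed by $w \in W(T,T')$, the residual $T(\bb{F}_q) \times T'(\bb{F}_q)$-action is constrained by $t' = w^{-1}tw$, so $\sum_{t,t'} \theta(t)^{-1} \theta'(t')^{-1}$ collapses by orthogonality of characters to $|T(\bb{F}_q)|$ if $\theta'(x^w) = \theta(x)$ for all $x \in T(\bb{F}_q)$, and to $0$ otherwise.

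Summing the contributions over $w \in W(T,T')$ and canceling the prefactor $1/(|G(\bb{F}_q)|\,|T(\bb{F}_q)|\,|T'(\bb{F}_q)|)$ against the orbit normalization then yields exactly the claimed count. The main obstacle is the geometric step: rigorously identifying the $G(\bb{F}_q)$-orbits on $X \times X'$ with $W(T,T')$, verifying that the fibers of the quotient map are affine (so that only the top compactly supported cohomology degree contributes and the Euler characteristic of each stratum is $1$), and tracking the stabilizer orders so that the normalization cancels exactly. This is the crux of Deligne--Lusztig's original argument, and is where the deep input (Deligne's Lefschetz formula for quotients by finite group actions, plus Poincar\'e duality for compactly supported $\ell$-adic cohomology of non-compact smooth varieties) enters; once it is in place, the remaining combinatorics of characters is routine.
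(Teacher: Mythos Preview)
The paper does not prove this theorem at all: it is quoted verbatim as \cite[Theorem 6.8]{Deligne-Lustig} and then used as a black box to count cuspidal representations of $\mathrm{GL}_n(\bb{F}_q)$. So there is no ``paper's own proof'' to compare your proposal against.

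That said, your sketch is a faithful outline of the original Deligne--Lusztig argument: expressing the inner product via Lefschetz numbers on $X\times X'$, passing to the quotient by $G(\bb{F}_q)$, stratifying by Bruhat cells indexed by $W(T,T')$, and using that each stratum is an affine fibration so its compactly supported Euler characteristic is $1$. The one place where your write-up is slightly loose is the passage from $\sum_g \mathcal{L}(g,t)\overline{\mathcal{L}(g,t')}$ to traces on the cohomology of a quotient: in the original argument this is not Poincar\'e duality but rather the observation that averaging over $g\in G(\bb{F}_q)$ projects onto $G(\bb{F}_q)$-invariants, together with the identification $H^*_c(X\times X')^{G(\bb{F}_q)}\cong H^*_c\bigl(G(\bb{F}_q)\backslash(X\times X')\bigr)$ for a free action. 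You should also be careful that the relevant quotient is by the diagonal $G(\bb{F}_q)$-action and that the stratification variable is $x^{-1}x'$ modulo the appropriate unipotents; once those identifications are made precise, the rest of your sketch goes through as in the original paper.
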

From this follows that if $\theta$ is in general position, then $R_{T,\theta}$ is an irreducible representation (up to sign), and that all of these are different (up to $G$-conjugation), in particular this means that if $T$ is anisotropic and $\theta$ is in general position then $R_{T,\theta}$ is an irreducible cuspidal representation. Now, by counting, we can see that these are all the cuspidal representations, proving the theorem.
\begin{proof}[{Proof of Theorem \cref{GlCusp}}]
We claim by induction that the number of cuspidal irreducible representations is $D_{n}$, the number of elements in $\bar{\bb{F}_{q}}^{*}$ of degree exactly $n$, divided by $n$.

For the case $n=1$, this is trivial.
Assume this is known for $x<n$, the number of irreducible representations of $GL_{n}(\bb{F}_{q})$ is the number of conjugacy classes, which can be described combinatorially as follows.

we have to choose distinct $\lambda_{1},\lambda_{2},...,\lambda_{k} \in \bar{\bb{F}_{q}}^*$ defined up to Frobenius, each of degree $d_{i}$, we have numbers $a_{1},...,a_{k}$ such that $\sum a_{i}d_{i}=n$, now for each $i$ we have to choose a partition of $a_{i}$.

notice that $D_{n}$ is a subset of this choice where $k=1, a_{1}=1$.

Now for each such $d_{i},a_{i}$ with $k>1$ or $a_{1}>1$ we have the Levi subgroup $\prod GL_{d_{i}}(\bb{F}_{q})^{a_{i}}$, by induction the number of cuspidal representations of it which are the same on $GL_{d_{i}}$ for the same $i$ and are different otherwise, is exactly the number of choices for $\lambda_{i}$, and by the theory of parabolic induction for $GL_{n}$, the parabolic induction splits into exactly the product for the number of partitions of $a_{i}$.

Therefore, the remaining representations are all the cuspidal ones, and there are exactly $D_{n}$ of them.

Since for the anisotropic torus, there are exactly $D_{n}$ characters in general position (up to conjugation), and for each one of them $R_{T,\theta}$ is a different cuspidal irreducible representation, these must be all of them, and the theorem is proved.
    
\end{proof}

Now the proof of \cref{theorem A} for cuspidal representations is a straightforward application of \cref{LT}.

\begin{proof}[Proof of \cref{theorem A}]
    Let $\pi$ be an irreducible cuspidal representation of $GL_{n}(\bb{F}_{q}^{m})$ which is $H$-distinguished. 
    By $\cref{GlCusp}$ we know $\pi=\pm R_{T,\theta}$ where $T$ is anisotropic and $\theta$ is in general position, It then follows form \cref{LT} that $R_{T,\theta}$ is $\tau$ invariant, and we are done.
\end{proof}

\section{Induction Lemma}\label{sec:4}
In this section we will prove $\cref{PindLem}$.








Throughout this section, let $P$ be a standard parabolic, $\sigma$ an involution, $H=G^{\sigma}$, $\pi$ is a representation of the Levi of $P$, and we will assume that the parabolic induction of $\pi$ to $G$ is $H$-distinguished.
\begin{remark}
    Since we work with finite groups, there is no difference between having invariant functionals (i.e., being distinguished) and invariant vectors, and so we will work with them interchangeably. 
\end{remark}

We start by using Mackey theorem to get:

\begin{claim} \label{MacCl}
    There is some $ PgH \in P\backslash G/H$ such that $\pi$ is distinguished with respect to $H^{g}\cap P$.
\end{claim}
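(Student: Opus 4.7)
The plan is a standard Mackey-plus-Frobenius argument in the setting of finite groups. First, I would realise the parabolic induction as an ordinary induction from $P$: let $\tilde\pi$ denote the inflation of $\pi$ from $L$ to $P$, i.e., the extension of $\pi$ to $P$ on which the unipotent radical acts trivially. Then $i_L^G(\pi) \cong \mathrm{Ind}_P^G(\tilde\pi)$, and by the Remark above, the hypothesis that $i_L^G(\pi)$ is $H$-distinguished is equivalent to
$$\mathrm{Hom}_H\bigl(\mathbf{1},\, \mathrm{Ind}_P^G \tilde\pi\bigr) \neq 0.$$

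Second, I would invoke Mackey's double-coset formula for the composition $\mathrm{Res}_H^G \circ \mathrm{Ind}_P^G$ in the finite-group setting. Indexing the double cosets as $P\backslash G/H$, this decomposes the restriction as a direct sum, where the $g$-th summand is an induction from $H \cap P^g$ (with $P^g = g^{-1}Pg$) of a suitable $g$-twist of $\tilde\pi$.

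Third, I would apply Frobenius reciprocity to each summand. The $H$-invariants in the $g$-th term are canonically identified with
$$\mathrm{Hom}_{H \cap P^g}(\mathbf{1},\, \tilde\pi^{g}) \;\cong\; \mathrm{Hom}_{H^g \cap P}(\mathbf{1},\, \tilde\pi),$$
after conjugating by $g$ to move the twist from the representation to the subgroup. Since the total Hom space is nonzero, at least one double coset $PgH$ contributes a nonzero Hom on the right, which is exactly the statement that $\tilde\pi$ (equivalently, $\pi$) is $(H^g \cap P)$-distinguished.

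There is essentially no obstacle here; the content is pure formalism. The only care needed is bookkeeping of conjugations, to ensure the final subgroup reads $H^g \cap P$ rather than $H \cap P^g$, and a consistent choice of indexing set ($P\backslash G/H$ versus $H\backslash G/P$, which differ by $g \leftrightarrow g^{-1}$). The real work happens in the subsequent claims, where this $(P \cap H^g)$-distinction is upgraded — via an analysis of the $P\backslash G/H$ geometry — into a distinguished Jacquet restriction with respect to an involution of the desired form.
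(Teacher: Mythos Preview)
Your proposal is correct and follows essentially the same route as the paper: apply Mackey's decomposition to $\mathrm{Ind}_P^G(\tilde\pi)|_H$ over $P\backslash G/H$, use Frobenius reciprocity on each summand, and conclude that at least one term $\langle \pi|_{H^g\cap P},\,1\rangle$ is nonzero. The only difference is cosmetic---you make the inflation $\tilde\pi$ explicit, whereas the paper tacitly treats $\pi$ as a $P$-representation from the start.
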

\begin{proof}
    First, we apply mackey theory:

$$  Ind_{P}^{G}( \pi)|_{H} = \bigoplus_{g\in P\backslash G/H} Ind_{H\cap P^g}^{H} (\pi^g|_{H\cap P^g}) $$
Therefore:
$$ \InPr{Ind_{P}^{G}( \pi) |_{\rmf{H}}}{{1}_{\rmf{H}}} = \sum_{g\in P\backslash G/H} \InPr{Ind_{H\cap P^g}^{H} (\pi^g|_{H\cap P^g})}{{1}_{\rmf{H}}} = $$
$$  = \sum_{g\in P\backslash G/H} \InPr{\pi^g|_{H\cap P^g}}{{1}_{\rmf{H \cap P^g}} } = \sum_{g\in P\backslash G/H} \InPr{\pi|_{H^g\cap P}}{{1}_{\rmf{H^g \cap P}}} $$

And since the LHS is positive, then so is the RHS, so one of those terms is non-zero, which is exactly what we wished to prove.
\end{proof}


Now we use the geometric lemma to get a partial description of $P\backslash G/H$:

\begin{lemma} \label{geo}
Let $G$ be a connected reductive algebraic group over $k$,$\sigma$ an involution of $G$ defined over $k$, $H=G^{\sigma}$ the fixed points of $\sigma$, and $P$ a minimal parabolic over $k$. Then $P$ contains a $\sigma$ stable maximal $k$ split torus $A$, and every double coset $H(k)gP(k)$ contains an element $x$ such that $x\cdot \sigma(x)^{-1} \in N(A(k))$. 
\end{lemma}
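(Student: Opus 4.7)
This is a classical rationality theorem for involutions of reductive groups, of the flavor of Helminck--Wang; in the paper's setting ($k=\bb{F}_q$, $G=GL_n$) Lang's theorem removes the rationality obstructions, so I would structure the plan as a geometric argument over $\bar k$ followed by a Lang-style descent, and for the second claim invoke the Richardson--Springer symmetric variety picture.

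For the existence of a $\sigma$-stable maximal $k$-split torus $A\subseteq P$, my plan is the following. Start with any maximal $k$-split torus $A_0\subseteq P$; by minimality of $P$ this is a maximal $k$-split torus of $G$. Consider $\sigma(P)$ and the intersection $P\cap\sigma(P)$, a connected $k$-subgroup. Using that any two maximal $k$-split tori of $P$ are $P(k)$-conjugate, I may first arrange $A_0\subseteq P\cap\sigma(P)$. Then $\sigma(A_0)\subseteq P\cap\sigma(P)$ is another maximal $k$-split torus of this intersection, hence $\sigma(A_0)=qA_0q^{-1}$ for some $q\in(P\cap\sigma(P))(k)$. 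A short Lang-theorem argument (solving for $r\in P(k)$ with $r^{-1}\sigma(r)\in q\cdot Z_P(A_0)$) then yields an $r\in P(k)$ such that $A:=rA_0r^{-1}$ is fixed by $\sigma$.

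For the second claim I use the twisted map $\mu:G\to G$, $g\mapsto g\sigma(g)^{-1}$, whose image is the symmetric variety $Q=\{y\in G:\sigma(y)=y^{-1}\}$ and whose fibers are left $H$-cosets. Under $\mu$, double cosets $H(k)\backslash G(k)/P(k)$ correspond to $P(k)$-orbits on $Q(k)$ under the $\sigma$-twisted action $p\cdot y=py\sigma(p)^{-1}$. For $g\in G(k)$, the Bruhat decomposition $G=\bigsqcup_{w\in W(A)}PwP$ relative to the $\sigma$-stable $A$ of part (a) places $\mu(g)$ in some Bruhat cell; a direct calculation shows that by $\sigma$-twisted conjugation by a suitable $p\in P(k)$ (allowed, since multiplying $g$ on the right by $p\in P(k)$ keeps us in the same $H$-$P$ double coset and changes $\mu(g)$ to $p\cdot\mu(g)$), the image can be moved into $N_G(A)(k)$, giving the required $x=gp$ with $x\sigma(x)^{-1}\in N(A(k))$.

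The main obstacle is $k$-rationality: elements produced by abstract conjugacy must be chosen in $P(k)$, $H(k)$, and $N_G(A)(k)$, not merely over $\bar k$. Over a finite field Lang's theorem applied to the relevant connected groups ($P\cap\sigma(P)$, $Z_P(A_0)$, and the $\sigma$-twisted orbit stabilizers) always supplies these, so the argument goes through cleanly. A concrete fallback, should a uniform argument prove fiddly, is to verify the lemma by hand for each of the four involution types of Section~\ref{sec:2.1} acting on $GL_n(\bb{F}_q)$ with $P$ the Borel of upper-triangulars, since in that setting both $A$ and the finite Weyl-type representatives $w\in N(A(k))$ can be exhibited explicitly.
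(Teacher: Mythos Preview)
The paper does not prove this lemma at all: it is stated as a known structural result (the Helminck--Wang / Springer description of $P\backslash G/H$ for symmetric varieties) and then immediately used. So there is no ``paper's own proof'' to compare against; your plan is effectively a sketch of the standard literature argument, and the fallback of verifying the four involution types on $GL_n$ by hand is exactly in the spirit of how the paper proceeds afterwards.

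That said, two steps in your sketch are doing more work than you acknowledge. In part (a), solving $r^{-1}\sigma(r)\in q\cdot Z_P(A_0)$ is \emph{not} a Lang equation: Lang's theorem concerns the Frobenius twist $g\mapsto g^{-1}\mathrm{Fr}(g)$, whereas here $\sigma$ is an abstract involution, and the map $g\mapsto g^{-1}\sigma(g)$ need not be surjective even on connected groups (its fibers compute $H^1(\langle\sigma\rangle,-)$, which is generally nontrivial). The actual Helminck--Wang argument for the existence of a $\sigma$-stable maximal $k$-split torus inside $P$ uses the structure of $(\sigma,k)$-split and $(\sigma,k)$-anisotropic pieces of tori rather than a cocycle-surjectivity trick; over $\bb{F}_q$ you can instead first produce a $\sigma$-stable maximal split torus over $\overline{\bb{F}_q}$ and then descend with Lang applied to its normalizer, but you should separate these two steps.

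In part (b), ``a direct calculation shows'' that the $\sigma$-twisted $P(k)$-orbit meets $N_G(A)(k)$ is precisely the content of the theorem and is not a one-line Bruhat manipulation: writing $\mu(g)=u_1 n u_2$ in a Bruhat cell, the twisted action $p\cdot y=p\,y\,\sigma(p)^{-1}$ moves the two unipotent pieces in incompatible ways, and one needs Springer's inductive argument on $\ell(w)$ (using that $\sigma$ permutes root subgroups relative to the $\sigma$-stable $A$) to kill them. Your overall architecture is correct, but if you intend to actually supply a proof rather than a citation, these are the two places that require real arguments.
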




We can assume that the maximally split torus is the standard one, then $Q(x)=x^{-1}\sigma(x)$ is a permutation matrix up to scalars.

Now $H^{x}$ consists of elements $g$ s.t. $xgx^{-1}\in H$ i.e. are preserved by $\sigma$, opening this we get that the condition is:
$$ \sigma(xgx^{-1})=xgx^{-1}$$
$$ Q(x)\sigma(g)Q(x)^{-1} = g$$
using that $\sigma(Q(x))=Q(x)^{-1}$ we see that $\sigma_{x}:g\rightarrow Q(x)\sigma(g)Q(x)^{-1}$ is an involution, and $H^{x}$ is the elements fixed by it.

Now let us insert some notations, our Levi $L$ correspond to a partition of $[n]$, we will consider as a function $f:[n]\rightarrow [m]$, such that $P$ is standard: $$P=\{A\in GL_{n}|\forall_{i,j} f(i)>f(j): A_{i,j}=0\}$$

Up to scalars $Q(x)$ is a permutation matrix, let us call the permutation $r$, we define $T_{a,b}=\{i\in [n]|f(i)=a,f(r(i))=b\}$, notice that if $L'$ is the Levi corresponding to this partition then $\sigma_{x}$ acts on $L'$, and it is in fact the largest standard Levi for which this is true.


We will now prove that a Jaquet restriction to $L'$ is distinguished.
Let $v\in \pi^{*}$ be a functional fixed by $H^{x}\cap P$, first we show:
\begin{lemma}\label{UniLem}
    Let $M$ be a matrix with only non-zero coordinates at $i,j$ with $i\in T_{a,b},j\in T_{a,c}$ with $b<c$ for the $Id,Fr$ types, or with $c<b$ for the $(-)^{-t},Fr^{-t}$ types, then $(I+M)v=v$.
\end{lemma}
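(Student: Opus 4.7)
The plan is to construct, for any such matrix $M$, an element $y \in H^x \cap P$ of the form $y = (I+M)\cdot u$ with $u$ lying in the unipotent radical $U_P$ of $P$. Once this is done the conclusion is immediate: $yv = v$ because $v$ is $H^x \cap P$-invariant, and $uv = v$ because $\pi$ is extended from $L$ to $P$ trivially on $U_P$, so $(I+M)v = yv = v$.

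To produce such a $y$, I would first linearize $\sigma_x = \mathrm{Ad}_{Q(x)} \circ \sigma$ at the identity, writing $\sigma_x(I + N) = I + d\sigma_x(N) + O(N^2)$. Decomposing $Q(x) = \delta\,\pi_r$ with $\pi_r$ a permutation matrix, the relation $\sigma(Q(x)) = Q(x)^{-1}$ forces $r$ to be an involution in $W$, and then $r(T_{a,b}) = T_{b,a}$. Tracking positions shows that $d\sigma_x(M)$ is supported at $T_{b,a} \times T_{c,a}$ for the $\mathrm{Id}$ and $Fr$ types, and at $T_{c,a} \times T_{b,a}$ for the $(-)^{-t}$ and $Fr(-)^{-t}$ types; the corresponding $f$-positions are $(b,c)$ or $(c,b)$, and the ordering hypothesis of the lemma is precisely what places $d\sigma_x(M)$ inside $U_P$.

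Next I would try the first-order candidate $y_0 = (I+M)(I + d\sigma_x(M))$. Since $b \neq c$, both $M^2$ and $d\sigma_x(M)^2$ vanish, and a short support computation shows that $M \cdot d\sigma_x(M) = 0$ unless $a = c$, while $d\sigma_x(M)\cdot M = 0$ unless $a = b$; at most one of these degenerate situations can occur. In the generic case both products vanish and $y_0 = I + M + d\sigma_x(M)$, which is $\sigma_x$-invariant since $d\sigma_x^2 = \mathrm{id}$. In each degenerate case the surviving quadratic product still lies at an $f$-position inside $U_P$, and the identity $d\sigma_x(M \cdot d\sigma_x(M)) = -M \cdot d\sigma_x(M)$ — provable from $B^t = \pm B$ for $B = Q(x) A_\sigma$ in the bilinear-form types, and from the analogous relation for the Frobenius types — allows restoration of $\sigma_x$-invariance by a further multiplication by $I + C$ with $C = \pm \tfrac{1}{2} M \cdot d\sigma_x(M)$; here I use that $q$ is odd.

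The resulting element $y = (I+M) \cdot u$, with $u = (I + d\sigma_x(M))(I + C) \in U_P$ and $y \in H^x \cap P$, completes the construction. The main obstacle is purely the bookkeeping: handling the four involution types uniformly and tracking all supports to check that the correction factors remain inside $U_P$; the underlying idea is the same in every case.
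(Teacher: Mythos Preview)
Your proposal is correct and follows essentially the same approach as the paper: form $(I+M)\,\sigma_x(I+M)$, observe that $\sigma_x(I+M)-I$ lands in $U_P$ under the stated ordering hypothesis, and in the non-commuting degenerate case (which in fact only occurs for the $(-)^{-t}$ and $Fr(-)^{-t}$ types, when $a=b$ or $a=c$) correct by a factor $I+\tfrac{1}{2}K$ using that $q$ is odd. The only differences are cosmetic --- you package $\sigma_x(I+M)=I+N$ as a linearization $d\sigma_x$, and the paper verifies $\sigma_x(1+K)=(1+K)^{-1}$ by a direct group computation rather than via $B^t=\pm B$ --- but the underlying argument is identical.
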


\begin{proof}
    If we denote by $V_{a,b}$ the space generated by the coordinates $T_{a,b}$, then we can think of $M$ as a map $M:V_{a,b}\rightarrow V_{a,c}$ extended by zero.

    Noticing $Fr$ doesn't change which coordinates are non-zero, we see that in the $Id,Fr$ cases, $\sigma(1+M)=1+N$ where $N$ is still a map $N:V_{a,b}\rightarrow V_{a,c}$. 

    Then since $r(T_{a,b})=T_{b,a}$, we see that $\sigma_{x}(1+M)=Q(x)\sigma(1+M)Q(x)^{-1}$ is $1+K$ with $K$ a map from $V_{b,a}$ to $V_{c,a}$.

    Now, since $b<c$ we see that $\sigma_{x}(1+M)$ is in the unipotent radical of $P$, in particular by the definition of parabolic induction $\sigma_{x}(1+M)v=v$.

    Now we notice that since we cannot have $T_{b,a}=T_{a,c}$ or $T_{c,a}=T_{a,b}$ since either of these would imply $b=c$, then we get $KM=0=MK$, in particular they commute, or equivavntly $1+M$ commutes with $\sigma_{x}(1+M)$.

    This now implies that: $$\sigma_{x}((1+M)\sigma_{x}(1+M))=\sigma_{x}(1+M)(1+M)=(1+M)\sigma_{x}(1+M)$$
    So $(1+M)\sigma_{x}(1+M)$ is in $H^{x}$, and it is also clearly in $P$, so we get:

    $$ v = (1+M)\sigma_{x}(1+M)v = (1+M)v$$

    And we are done in this case.

    For the types $(-)^{-t},Fr^{-t}$ things are similar.

    Since $(1+M)^{-1}=1-M$, we see that $\sigma(1+M)-1$ would end up taking $V_{a,c}$ to $V_{a,b}$ as transposing switches them.

    Then $\sigma_{x}(1+M)=1+N$ with $N$ taking $V_{c,a}\rightarrow V_{b,a}$, again this time if $c<b$ this is in the unipotent radical.

    The difference is that this time it can be that $T_{c,a}=T_{a,c}$ or $T_{b,a}=T_{a,b}$ (but not both at the same time), at which point $M,N$ do not commute.

    Let's consider the case $a=b$, the other is done similarly. looking at this we have $NM=0$ but $MN=K$ is a matrix taking $V_{c,a}$ to $V_{a,c}$, now $r$ does commute with both $M$ and $N$, so let us consider:

    $$ (1+M)\sigma_{x}(1+M)=1+M+N+K=(1+N)(1+M)(1+K)=\sigma_{x}(1+M)\cdot (1+M)\cdot (1+K)$$

    applying $\sigma_{x}$ to this equation we get:
    $$ \sigma_{x}(1+M) \cdot (1+M)= (1+M)\sigma_{x}(1+M)\sigma_{x}(1+K)=\sigma_{x}(1+M)(1+M)(1+K)\sigma_{x}(1+K)$$
    So $(1+K)=\sigma_{x}(1+K)^{-1}$,now using $(1+\frac{K}{2})^{2}=(1+K)$ we get can find that:


    $$\sigma_{x}(\sigma_{x}(1+M)(1+M) (1+\frac{K}{2}))=(1+M)\sigma_{x}(1+M)\sigma_{x}(1+\frac{K}{2})=$$
    $$=\sigma_{x}(1+M)(1+M)(1+K)\sigma_{x}(1+\frac{K}{2})=\sigma_{x}(1+M)(1+M)(1+\frac{K}{2})$$

    Where the last equality follows because $\sigma_{x}(1+K)=1-K$, and $\sigma_{x}$ restricted to matrices of the form $1+U$ where $U$ is from $T_{a,c}$ to $T_{c,a}$ is an isomorphism of a linear group, in particular it is a linear map.

    Now $1+\frac{K}{2}$ is also in the unipotent radical, so like before:
    $$v =  (1+M) \sigma_{x}(1+M) \sigma_{x}(1+\frac{K}{2})v = (1+M) \sigma_{x}(1+M)v = (1+M)v$$
\end{proof}

Now we can complete the proof of \cref{PindLem}:
\begin{lemma}
    Let $G= GL_{n}(\bb{F}^{q^{m}})$ where $m$ is $1$ or $2$, $\sigma$ an involution of $G$ defined over $\bb{F}_{q}$, and $H=G^{\sigma}$. let $L\subseteq P$ be a Levi subgroup and parabolic of $G$ , $\pi$ a representation of $L$. If $Pind_{L}^{G}(\pi)$ is $H$-distinguished, then there is a levi subgroup $L'\subseteq L$ and an element $A\in G$ such that the map $X\rightarrow A\sigma (X)A^{-1}$ is an involution, preserves $L'$, and $r_{L'}^{L}(\pi)$ is distinguished with respect to this involution.
\end{lemma}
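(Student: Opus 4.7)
The plan is to chain together the three ingredients already developed: the Mackey decomposition in \cref{MacCl}, the geometric lemma \cref{geo}, and the unipotent invariance in \cref{UniLem}. First, by \cref{MacCl} there is a coset representative $g \in G$ such that $\pi$ is $(H^g \cap P)$-distinguished. Applying \cref{geo} within the same double coset, we may replace $g$ by an element $x$ for which $Q(x) := x^{-1}\sigma(x)$ lies in $N(A)$, with $A$ the standard split maximal torus; hence $Q(x)$ is a permutation matrix up to scalars. A direct check using $\sigma(Q(x)) = Q(x)^{-1}$ shows that $\sigma_x(Y) := Q(x)\sigma(Y)Q(x)^{-1}$ is an involution with fixed-point subgroup $H^x$. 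Setting $A := Q(x)$, the map $X \mapsto A\sigma(X)A^{-1}$ is exactly $\sigma_x$, so $A$ will serve as the element demanded by the lemma, provided we can produce the Levi $L'$ together with a $\sigma_x$-invariant functional on $r_{L'}^L(\pi)$.

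To produce $L'$, let $r$ be the permutation attached to $Q(x)$ and encode $L$ by $f\colon [n]\to[m]$. Define $T_{a,b} = \{i : f(i) = a,\; f(r(i)) = b\}$ and let $L'$ be the standard Levi refining $L$ via the nonempty blocks $T_{a,b}$. Conjugation by $Q(x)$ swaps $T_{a,b} \leftrightarrow T_{b,a}$, while $\sigma$ (whether it involves transpose, Frobenius, or neither) preserves each such block setwise; so $\sigma_x$ preserves $L'$. A short combinatorial check shows that $L'$ is the \emph{maximal} standard Levi of $L$ on which $\sigma_x$ acts, matching the remark following the statement of \cref{PindLem}. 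Next, pick a nonzero $(H^x \cap P)$-invariant functional $v \in \pi^*$. By \cref{UniLem}, $v$ is fixed by every element $1+M$ with $M$ sending $V_{a,b}$ into $V_{a,c}$ for $b<c$ in the $Id$- and $Fr$-types (respectively $c<b$ in the $(-)^{-t}$- and $Fr(-)^{-t}$-types). These generators exhaust the unipotent radical of the parabolic in $L$ whose Levi is $L'$, so $v$ descends to a nonzero functional on the Jacquet module $r_{L'}^L(\pi)$; the descent is automatically invariant under $L' \cap H^x = (L')^{\sigma_x}$, establishing that $r_{L'}^L(\pi)$ is distinguished with respect to $\sigma_x|_{L'}$.

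The main technical obstacle lies in the descent step: one has to verify that the inequalities $b<c$ and $c<b$ appearing in \cref{UniLem} correspond precisely to those root subgroups of $L$ that lie in the unipotent radical above $L'$, and that the direction of inequality is consistent with how the standard parabolic $P$ is placed inside $G$. This is exactly why \cref{UniLem} was stated in two cases, one for each class of involution type, and it is the point where the combinatorics of the permutation $r$ must be compared with the parabolic ordering. Once this compatibility is laid out, the remaining passage from $\pi^*$ to $(r_{L'}^L(\pi))^*$ is a formal manipulation of Jacquet modules.
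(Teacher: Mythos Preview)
Your proposal is correct and follows essentially the same route as the paper: Mackey (\cref{MacCl}) plus the geometric lemma (\cref{geo}) to obtain $x$ with $Q(x)\in N(A)$, then \cref{UniLem} to get invariance under the unipotent radical of the parabolic of $L$ with Levi $L'$, and finally descent to the Jacquet module with $(L')^{\sigma_x}$-invariance. Your write-up is more explicit about why $\sigma_x$ preserves $L'$ and about the compatibility of the inequality direction in \cref{UniLem} with the parabolic ordering; the paper simply asserts the former and records the latter as the lexicographic order (with the second coordinate reversed in the transpose types).
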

\begin{remark}
    While the Levi-decomposition used for the restriction does depend on the involution, and is not necessarily the standard one, all the Jaquet restrictions are actually isomorphic, so the statement is true regardless. In addition it will not be important to us which unipotent subgroup we use further in.
\end{remark}
\begin{proof}
    By \cref{MacCl} and \cref{geo} we know there is $x$ with $Q(x)\in N(A)$ s.t. $\pi$ is distinguished with respect to $H^{x}\cap P$. By \cref{UniLem} the invariant functional is also invariant under $1+M$ when $M$ takes $V_{a,b}$ to $V_{a,c}$ with $b<c$ or $b>c$ depending on the type. This kind of matrices generate the unipotent radical of a parabolic with respect to the partition $V_{a,b}$. (The order is either the lexicographic one or the lexicographic with the reversed order in the second variable). Call this unipotent radical $U$ and the Levi $L'$, then we see that $\pi^{U}$ is distinguished with respect to $L'\cap H^{x}=L'^{\sigma_{x}}$.

    $\pi^{U}$ is a Jaquet restriction of $\pi$, and we are done.
\end{proof}

\section{Proof of Main Theorem}\label{sec:5}
What remains of the proof is now done in two steps, the primary case, then the general case.



\begin{proof} [Proof of \cref{theorem A}] As discussed the proof is in two parts.

\textbf{The $\rho$-Primary Case}

Let $\pi$ be a distinguished $\rho$-isotopic representation,  then in particular $i_{P}^{G}\rho^{\otimes k}$ is distinguished for some $k$.

By \cref{PindLem} we have a Jaquet restriction of $\rho^{\otimes k}$ which is distinguished with respect to some $\sigma_{x}$, seeing as $\rho$ is cuspidal, any non-trivial Jaquet restriction is zero, so we see $\rho^{\otimes k}$ is distinguished as a representation of $GL_{r}(\bb{F})^k$ . 

$\sigma_{x}$ is an involution, so it acts on $GL_{r}(\bb{F})^{k}$ by acting on single components, or by interchanging two. So, picking the first component, we have either:
\begin{itemize}
    \item $\rho$ is distinguished with respect to $\sigma_{x} |_{GL_{r}}$. 
    \item $\rho \times \rho$ is distinguished with respect to $\sigma_{x}|_{GL_{r}\times GL_{r}}$.
\end{itemize}

Notice that in the first case, $\sigma_{x}|_{GL_{r}}$ has the same type as $\sigma$, while in the second case the involution has the form $(x,y)\rightarrow (f(y),f^{-1}(x))$ where $f$ isn't necessarily an involution, but still is of type $\sigma$.

In the first case, we know by the cuspidal case of  $\cref{theorem A}$ that $\rho$ is $\sigma$-type invariant.

In the second case, we get $\rho \times \rho$ is distinguished by the group of elements of the form $(f(x),x)$, the restriction to this subgroup is $\rho \otimes \rho^{f}\cong Hom(\rho,\rho^{f,*})$, this having an invariant vector means that these representations are isomorphic, and since $f$ is of $\sigma$-type, this exactly means that $\rho$ is $\sigma$-type invariant again. 

Now, in either case, by \cref{ZelRes} we get that $\pi$ is also $\sigma$-invariant, and we are done.
\bigbreak

\textbf{The General Case}


Let $\pi$ be an irreducible distinguished representation. By \cref{ZelPind}, $\pi$ is the parabolic induction of a tensor of $\rho_{i}$-isotopic representations for different $\rho_{i}$, let us write the block for $\rho_{i}$ as $T_{i}$.

Using \cref{PindLem} we know there is $x$ with $Q(x)\in N(A)$ and $L'$  maximal such that $\sigma_x$ acts on it, such that the Jaquet restriction to $L'$ is $\sigma_{x}$ distinguished.

Now $L'$ is made of blocks, each one sits inside one of the blocks $T_{i}$, and so corresponds to some $\rho_{i}$. Denote these blocks $S_{i,j}$.

If we have a block $S_{i,j}$ that is preserved by $\sigma_{x}$, the Jaquet restriction to $L'$, has in the component of this block a $\rho_{i}$-isotopic representation since it is in the Jaquet restriction of $\rho_i$, and this component is distinguished with respect to $\sigma_{x}|_{S_{i,j}}$, and so by the previous part of the proof, $\rho_{i}$ is $\sigma$ type invariant.
If there are two blocks  $S_{i,j},S_{i,j'}$ that are interchanged, then we have as before two $\rho_{i}$-isotopic representations that are $\sigma$ type interchanged, in particular $\rho_{i}$ is $\sigma$-type invariant again.

finally if we have two blocks $S_{i,j}$ and  $S_{i',j'}$ that are interchanged, then there is $\rho_{i}$-isotopic representation that is $\sigma$-type interchanged with a $\rho_{j}$-isotopic representation, so in particular $\rho_{i}$ and $\rho_{j}$ are $\sigma$ interchanged.

In conclusion, since each $\rho_{i}$ can only be interchanged with one other $\rho_{j}$, we get that the $\rho_{i}$ have two options.

\begin{itemize}
    \item $\rho_{i}$ is $\sigma$-type invariant, $Q(x)$ preserve $T_{i}$, in particular the $\rho_{i}$-isotopic representation is also $\sigma$-type invariant.
    \item $\rho_{i}$ is $\sigma$-type interchanged with $\rho_{j}$, $Q(x)$ interchanges $T_{i}$ and $T_{j}$ in particular the Jaquet restriction is trivial, and the $\rho_{i}$-isotopic representation is $\sigma$-type interchanged with the $\rho_{j}$-isotopic representation.
\end{itemize}

In conclusion, the $\rho_{i}$-isotopic representations are either $\sigma$-type invariant, or come in pairs of $\sigma$-type pairs, since the $\sigma$-type involution commutes with parabolic induction, and parabolic induction does not care of the order of the blocks, we conclude the $\pi$ is $\sigma$ invariant, and we are done.

\end{proof}


\bibliographystyle{alpha}
\bibliography{main}

\end{document}